\definecolor{DarkBlue}{rgb}{0,0.2,0.6}
\definecolor{PinkPurple}{rgb}{0.1,0.1,0.8}
\definecolor{Green}{rgb}{0,1,0}
\newcommand{\ma}[1]{{{#1}}}
\newcommand{\me}[1]{{{#1}}}
\newtheorem{thm}{Theorem}[section]
\newtheorem{lemma}[thm]{Lemma}
\newtheorem{prop}[thm]{Proposition}
\newtheorem{crl}[thm]{Corollary}
\theoremstyle{definition}
\newtheorem{dfn}[thm]{Definition}
\newtheorem{exm}[thm]{Example}
\newtheorem{rem}[thm]{Remark}
\newcommand{\reals}{\mathbb{R}}
\newcommand{\naturals}{\mathbb{N}}
\newcommand{\cplx}{\mathbb{C}}
\newcommand{\fld}{\mathbb{F}}
\newcommand{\X}[1]{\mathcal{X}(#1)}
\newcommand{\supp}[1]{\text{supp}(#1)}
\newcommand{\norm}[2]{\|\ifthenelse{\isempty{#2}}{\cdot}{#2}\|_{#1}}
\newcommand{\cl}[2]{\overline{#2}^{\ifthenelse{\isempty{#1}}{}{#1}}}
\newcommand{\Cnt}[2]{\mathrm{C}_{\ifthenelse{\isempty{#1}}{}{#1}}(#2)}
\newcommand{\Meas}[2]{\mathrm{M}_{\ifthenelse{\isempty{#1}}{}{#1}}(#2)}
\newcommand{\Psd}[2]{\mbox{Psd}_{\ifthenelse{\isempty{#1}}{}{#1}}(#2)}
\newcommand{\Bnd}[2]{\mbox{B}_{\ifthenelse{\isempty{#1}}{}{#1}}(#2)}
\newcommand{\Sp}[2]{\mathfrak{sp}_{\ifthenelse{\isempty{#1}}{}{#1}}(#2)}
\newcommand{\Top}[2]{\tau(#1{\ifthenelse{\isempty{#2}}{}{,#2}})}
\newcommand{\nbhd}[2]{\mathcal{N}_{\ifthenelse{\isempty{#2}}{}{#2}}(#1)}
\newcommand{\map}[3]{#1:#2\longrightarrow #3}
\newcommand{\ignore}[1]{{ }}
\begin{document}

\title[Seminormed $\ast$-subalgebras of $\ell^{\infty}(X)$]{Seminormed $\ast$-subalgebras of $\ell^{\infty}(X)$}
\author[M. Alaghmandan, M. Ghasemi]{Mahmood Alaghmandan$^1$, Mehdi Ghasemi$^2$}
\address{$^1$Department of Mathematical Sciences,\newline\indent
Chalmers University of Technology and\newline\indent
University of Gothenburg,\newline\indent
Gothenburg SE-412 96, Sweden}
\email{mahala@chalmers.se}
\address{$^2$Department of Mathematics and Statistics,\newline\indent
University of Saskatchewan,\newline\indent
106 Wiggins Road,\newline\indent
Saskatoon, SK. Canada S7N 5E6}
\email{mehdi.ghasemi@usask.ca}
\keywords{
measurable functions, commutative normed algebras, function algebras, Gelfand spectrum, seminormed algebras, measures on Boolean rings}
\subjclass[2010]{Primary 46J05, 46J10,
Secondary 46E30.}
\date{\today}
\begin{abstract}
Arbitrary representations of a commutative unital ($\ast$-) $\fld$-algebra $A$ as a subalgeba of $\fld^X$ are considered, where $\fld=\cplx$ 
or $\reals$ and $X\neq\emptyset$. The  Gelfand spectrum of $A$ is explained as a topological extension of $X$ where a seminorm on the image of
$A$ in $\fld^X$ is present. It is shown that among all seminormes, the $\sup$-norm is of special importance which reduces $\fld^X$ to 
$\ell^{\infty}(X)$. The Banach subalgebra of $\ell^{\infty}(X)$ of all $\Sigma$-measurable bounded functions on $X$,  $\Meas{b}{X,\Sigma}$, is studied for
which $\Sigma$ is a $\sigma$-algebra of subsets of $X$. In particular, we study   lifting of positive measures from $(X, \Sigma)$  to the Gelfand  spectrum of $\Meas{b}{X,\Sigma}$ and observe an unexpected shift in the support of measures. In the case that $\Sigma$ is the 
Borel algebra of a topology, we study the relation of the underlying topology of $X$   and  the one of the Gelfand   spectrum of $\Meas{b}{X, \Sigma}$.

\end{abstract}
\maketitle
\section{Introduction}
It is  common to look at rings and algebras as families of functions over a nonempty set with values in a suitable ring or field.
This is specially helpful if one wants to study the ideal structure of a ring or algebra which naturally involves topological 
notions, mainly compactness.

In this article, we try to summarize some  observations about topological algebras in an abstract manner. One motivation comes from
\cite{MGHSKMM2} which attempts to represent positive linear functionals on a given commutative unital algebra as an integral with respect to a 
positive measure on the space of characters of the algebra. This is done by realizing the algebra as a subalgebra of continuous functions 
over the character space.

During the present article we always assume that $A$ is a commutative algebra over the field $\fld=\reals$ or $\cplx$ equipped with a 
seminorm $\rho$. In section \ref{gen-sett}, first we provide a brief overview of the theory of seminormed algebras and their Gelfand spectrum.
Then, we assume that $A$ can be embedded into $(\fld^X, \rho)$ for a nonemty set $X$ where $\rho$ defines a submultiplicative 
seminorm on a subalgebra of $\fld^X$ including the image of $A$. This induces a seminormed structure on $A$ as well. Theorem \ref{jst-bndd} gives
a necessary and sufficient condition for $X$ to be dense in the Gelfand spectrum of $A$,  {that is,  when the topology induced by the seminorm 
is equivalent to the topology induced by the norm infinity}. 

Motivated by \cite{MGHSKMM2}, where  positive linear functionals on an algebra are presented as an integral with
respect to a constructibly Radon measure, in section \ref{meas-struct}, we consider a measurable structure $\Sigma$ on $X$ and study the spectrum 
of the algebra of bounded measurable functions  on $(X, \Sigma)$, denoted by $\Meas{b}{X,\Sigma}$. We prove that positive measures on $X$ lift 
to positive measures over the spectrum of $\Meas{b}{X,\Sigma}$, but this lifting shifts the support of the original measure out of $X$ modulo 
at most a countable subset of $X$ (propositions \ref{meas-ext} and \ref{meas-shift}). Then we choose $\Sigma$ to be $\mathcal{B}_{\tau}$, the 
Borel algebra of a topology $\tau$ on $X$, and observe some connections between $\tau$ and the spectrum of $\Meas{b}{X,\mathcal{B}_{\tau}}$ 
(proposition \ref{Open-halos} and theorem \ref{semi-robinson}).
\subsection*{Notations}
Let $X$ be a non-empty set  and $\mathfrak{S}$ be a structure on $X$ which induces a topology on $X$.  We denote this topology by 
$\Top{X}{\mathfrak{S}}$. For instance, let $\mathfrak{S}$ be a family of functions, defined on$X$, with values in a topological space
Then $\Top{X}{\mathfrak{S}}$ is the coarsest topology on $X$ which makes every function in $\mathfrak{S}$ continuous.

The $\sigma$-algebra of sets induced on $X$ by a set $\Lambda\subseteq2^X$ is denoted by $\sigma(\Lambda)$. In particular if $\tau$ is a topology
on $X$ then $\sigma(\tau)$ is the $\sigma$-algebra, denoted by  $\mathcal{B}_{\tau}$, of all Borel subsets of $(X,\tau)$.
\section{{Involutive subalgebras of $\ell^{\infty}(X)$}}\label{gen-sett}
\ma{The  set theory which is applied in this paper is  ZFC}. Throughout this article all algebras are assumed to be involutive and commutative over a field $\fld$ 
(which is either $\reals$ or $\cplx$ as specified). \ma{Subsequently, all  ($*$-)algebra homomorphisms are also supposed to be $\fld$-module maps.}
\ma{
\begin{dfn}\label{d:semi-norm}
A function $\map{\rho}{A}{[0,\infty]}$ is called a {\it quasi-norm } on $A$ if 
\begin{itemize}
	\item[(1)]{
	$\rho(a^*) = \rho(a)$  $\forall a\in A$
	}
	\item[(2)]{
	$\rho(a+b)\leq\rho(a) + \rho(b)$ $\forall a,b\in A$ (subadditive),
	}
	\item[(3)]{
	$\rho(ra)=|r|\rho(a)$ $\forall r\in\fld~\forall a\in A$,
	}
\end{itemize}
it is called {\it submultiplicative}, if
\begin{itemize}
	\item[(4)]{
	$\forall a,b\in A\quad \rho(a\cdot b)\leq\rho(a)\rho(b)$.
	}
\end{itemize}
A {\it quasi-norm } $\rho$ on $A$ is called a {\it seminorm} if  $\rho(a)<\infty$  for every   $a \in A$.
\end{dfn}
}

Let $A$ be a commutative algebra and  let $\rho$ be a quasi-norm on $A$. The set of all elements of $A$ with a finite quasi-norm $\rho$ is denoted by $\Bnd{\rho}{A}$, i.e.,
\[
	\Bnd{\rho}{A} = \{a\in A : \rho(a)<\infty\}.
\]
If $\rho$ is a submultiplicative quasi-norm $\rho$, it is clear that $\Bnd{\rho}{A}$ is a $\ast$-subalgebra of $A$ and the restriction of 
$\rho$ to $\Bnd{\rho}{A}$ is a {seminorm}. An element $a\in A$ is called symmetric if $a^*=a$. 
The set of all symmetric elements of $A$ are denoted by $S(A)$. 
An algebra $A$ with a seminorm $\rho$ forms a {\it seminormed algebra} if $\rho$ is   submultiplicative.
For a seminormed algebra $(A, \rho)$, the set of all \ma{non-zero} $\ast$-algebra homomorphisms $\map{\alpha}{A}{\fld}$ is 
denoted by $\X{A}$. The subset of $\X{A}$ consisting of all $\rho$-continuous homomorphisms is called the {\it Gelfand spectrum}
of $(A,\rho)$ and denoted by $\Sp{\rho}{A}$ which is a closed subspace of $\X{A}$.

Note that $S(A)$ is always an $\reals$-algebra and if $\fld = \cplx$, then $A=S(A)\oplus iS(A)$. Moreover there is a 
bijective correspondence between $\X{S(A)}$ and $\X{A}$: If $\alpha\in\X{A}$, then clearly $\alpha|_{S(A)}$ is real 
valued and hence $\alpha|_{S(A)}\in\X{S(A)}$. Also for every $\alpha\in\X{S(A)}$, its extension defined by
$\bar{\alpha}(a+ib)=\alpha(a)+i\alpha(b)$ is well-defined and $\bar{\alpha}|_{S(A)}=\alpha$. Next, we give a 
characterization of all $\rho$-continuous $\fld$-valued homomorphisms. \ma{The following lemma was proved as Lemma~3.2 in \cite{GKM}. }

\begin{lemma}\label{cts-hom}
Let $\alpha \in \X{A}$. Then   $\alpha\in\Sp{\rho}{A}$ if and only if $|\alpha(a)|\leq\rho(a)$, for all $a\in A$. 
\end{lemma}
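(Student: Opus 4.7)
The plan is to handle the two implications separately. The reverse direction is immediate: if $|\alpha(a)|\leq\rho(a)$ for every $a\in A$, then by $\fld$-linearity
\[
	|\alpha(a)-\alpha(b)|=|\alpha(a-b)|\leq\rho(a-b)\quad\forall a,b\in A,
\]
so $\alpha$ is $1$-Lipschitz with respect to $\rho$ and hence $\rho$-continuous.

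For the forward direction I would first translate continuity of the linear functional $\alpha$ into a bound. Since $\alpha$ is $\rho$-continuous at $0$, there exists $\varepsilon>0$ such that $\rho(a)<\varepsilon$ forces $|\alpha(a)|<1$. Homogeneity of $\rho$ and $\alpha$ then yields a constant $C=1/\varepsilon$ with
\[
	|\alpha(a)|\leq C\rho(a)\quad\forall a\in A,
\]
where the case $\rho(a)=0$ is handled by applying the bound to $\lambda a$ for arbitrary $\lambda>0$ and letting $\lambda\to\infty$.

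The main step is then a standard spectral-radius style trick exploiting that $\alpha$ is multiplicative and $\rho$ is submultiplicative. For every $n\in\naturals$,
\[
	|\alpha(a)|^n=|\alpha(a^n)|\leq C\rho(a^n)\leq C\rho(a)^n,
\]
so $|\alpha(a)|\leq C^{1/n}\rho(a)$. Letting $n\to\infty$ gives $|\alpha(a)|\leq\rho(a)$, as desired.

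I do not expect any serious obstacle; the only mild subtlety is that $\rho$ is only a seminorm (so it can vanish on nonzero elements), which is why the $\lambda\to\infty$ argument above is needed to extract a genuine bound $|\alpha(a)|\leq C\rho(a)$ from the neighborhood-of-zero definition of continuity before the power trick can be applied.
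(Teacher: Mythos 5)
Your proof is correct and is essentially the standard argument: the paper itself omits the proof, citing Lemma~3.2 of \cite{GKM}, where the same two steps appear --- continuity plus homogeneity yields $|\alpha(a)|\leq C\rho(a)$, and the multiplicativity of $\alpha$ together with submultiplicativity of $\rho$ upgrades this to $C^{1/n}$ and hence to $C=1$. Your handling of the $\rho(a)=0$ case via scaling is the right way to deal with $\rho$ being only a seminorm, so there is no gap.
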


The Gelfand spectrum $\Sp{\rho}{A}$ (as well as $\X{A}$) naturally carries a Hausdorff topology as a subspace of 
$\fld^A$ with the product topology. For a real number $r>0$, let $D_{r}:=\{c\in\fld : |c|\leq r\}$. According to Lemma ~\ref{cts-hom}, 
 $\Sp{\rho}{A} \subseteq \prod_{a\in A}D_{\rho(a)}$.  \ma{One simple approximation argument implies that every element in the closure of  $\Sp{\rho}{A}$ is a  $\ast$-algebra $\fld$-homomorphism. But it also belongs to  $\prod_{a\in A}D_{\rho(a)}$. Therefore, the closure of $\Sp{\rho}{A}$ is a subset of  $\Sp{\rho}{A} \cup \{{\bf 0}\}$ where $\bf 0$ is the constant linear functional zero on $A$.  From now on, we use $\Sp{\rho}{A}$ to denote it as a topological subspace of $\prod_{a\in A}D_{\rho(a)}$. }

\vskip0.5em

\ma{ Note that the difference between the following corollary and \cite[Corollary~3.3]{GKM} is due to the fact that we exclude zero in the definition of  $\X{A}$. }
\ma{
\begin{crl}\label{unit-cmpt}
Let $(A, \rho)$ be a commutative seminormed algebra. If $A$ is unital then $\Sp{\rho}{A}$ is compact. If $\Sp{\rho}{A}$ is compact then there exists an element $a_0 \in A$ such that $|\alpha(a_0)| \geq 1$ for every $\alpha \in \Sp{\rho}{A}$.
\end{crl}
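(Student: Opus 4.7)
\textbf{First assertion.} The plan is to use Tychonoff's theorem together with the remark preceding the corollary, which identifies $\cl{}{\Sp{\rho}{A}}\setminus\Sp{\rho}{A}$ with a subset of $\{{\bf 0}\}$. Since $\prod_{a\in A}D_{\rho(a)}$ is a product of compact sets and hence compact, it suffices to show that $\Sp{\rho}{A}$ is closed in this product whenever $A$ is unital. If $1_A\in A$ denotes the unit, then every non-zero $\ast$-algebra homomorphism sends $1_A$ to $1$, so $\Sp{\rho}{A}$ is contained in the closed slice $\{\phi\in\prod_{a}D_{\rho(a)}:\phi(1_A)=1\}$, which in turn excludes $\bf 0$. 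Combined with the preceding paragraph, this forces $\cl{}{\Sp{\rho}{A}}=\Sp{\rho}{A}$, and compactness follows.

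\textbf{Second assertion.} Here the plan is to turn the non-vanishing of each $\alpha\in\Sp{\rho}{A}$ into an open cover and then use compactness. For each $a\in A$, the evaluation $\hat a:\alpha\mapsto\alpha(a)$ is continuous on $\Sp{\rho}{A}$, so the set $U_a:=\{\alpha\in\Sp{\rho}{A}:|\alpha(a)|>1\}$ is open. Since every $\alpha\in\Sp{\rho}{A}$ is non-zero, there exists some $b\in A$ with $\alpha(b)\neq 0$, and after rescaling $b$ by a large enough positive real number we obtain $\alpha\in U_{rb}$ for suitable $r>0$. Hence $\{U_a:a\in A\}$ is an open cover of $\Sp{\rho}{A}$.

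\textbf{Combining finitely many witnesses into one.} Assuming $\Sp{\rho}{A}$ is compact, extract a finite subcover $U_{a_1},\dots,U_{a_n}$ and set
\[
 a_0 \;:=\; \sum_{i=1}^{n} a_i^{\ast}a_i.
\]
For any $\alpha\in\Sp{\rho}{A}$, because $\alpha$ is a $\ast$-homomorphism into $\fld$ one has $\alpha(a_i^{\ast}a_i)=\overline{\alpha(a_i)}\,\alpha(a_i)=|\alpha(a_i)|^2\geq 0$, so $\alpha(a_0)\geq\max_i|\alpha(a_i)|^2$. Since $\alpha$ lies in at least one $U_{a_j}$, we get $\alpha(a_0)\geq|\alpha(a_j)|^2>1$, so in particular $|\alpha(a_0)|\geq 1$, as required.

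\textbf{Expected obstacle.} The only delicate point is ensuring that the finitely many $a_i$'s can be packaged into a \emph{single} element of $A$ on which every character is bounded below. The natural linear combination $\sum a_i$ fails because the values $\alpha(a_i)\in\fld$ may cancel; the remedy is the $\ast$-structure, which through the identity $\alpha(a^{\ast}a)=|\alpha(a)|^2$ converts cancellation into addition of non-negative reals. This is the reason the statement is formulated in the $\ast$-algebraic setting, and it is the step I would be most careful to verify.
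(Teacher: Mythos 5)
Your proof is correct, and the first assertion is handled exactly as in the paper: the unit forces $\alpha(1_A)=1$ for every $\alpha\in\Sp{\rho}{A}$, which keeps $\bf 0$ out of the closure, so $\Sp{\rho}{A}$ is closed in the compact product $\prod_{a\in A}D_{\rho(a)}$. For the second assertion your route genuinely differs from the paper's. The paper argues that, since $\Sp{\rho}{A}$ is a closed subset of the product missing $\bf 0$, some \emph{single} coordinate $a$ already satisfies $|\alpha(a)|\geq\epsilon$ on all of $\Sp{\rho}{A}$, and then rescales $a_0:=a/k$ with $k=\inf_\alpha|\alpha(a)|$. As written that step is slightly too quick: a basic neighbourhood of $\bf 0$ in the product topology constrains only finitely many coordinates $a_1,\dots,a_n$, so separating $\bf 0$ from $\Sp{\rho}{A}$ a priori yields finitely many elements, each $\alpha$ being bounded below on at least one of them, not a single $a$. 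Your open-cover formulation makes this finiteness explicit, and your packaging step $a_0=\sum_i a_i^*a_i$, using $\alpha(a^*a)=|\alpha(a)|^2\geq 0$ to prevent cancellation, is precisely what is needed to pass from finitely many witnesses to one; it is the detail you flagged as delicate, and it is indeed the point where the paper's shorter argument leaves a gap that your version closes. The trade-off is that the paper's proof, when the single-coordinate claim does hold, is shorter and produces $a_0$ by a simple rescaling, whereas yours is longer but fully justified and leans on the $\ast$-structure exactly where the corollary's hypotheses supply it.
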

}
\ma{
\begin{proof}
\ma{If $A$ is unital, one may use the identity element, $\bf 1$, (for which we have $\alpha({\bf 1})=1$ for every $\alpha \in \Sp{\rho}{A}$) to show that   
$\bf 0$ does not belong to the closure of $\Sp{\rho}{A}$. Therefore, $\Sp{\rho}{A}$ is indeed a closed set in $\prod_{a\in A}D_{\rho(a)}$. 
And subsequently,  $\Sp{\rho}{A}$ is  compact. }

Now suppose that $\Sp{\rho}{A}$ is compact. 
Therefore, $\Sp{\rho}{A}$ is a closed subset of $\prod_{a\in A}D_{\rho(a)}$\me{, not containing {\bf 0}. So, there exists $a\in A$ and 
$\epsilon>0$ such that the projection on $a^{th}$ component of the above product does not intersect with the neighbourhood 
$(-\epsilon, \epsilon)$ of $0$. Thus, this particular element $a$, satisfies $|\alpha(a)|\ge\epsilon$ for each $\alpha\in\Sp{\rho}{A}$.} 
Let $k = \inf\{ |\alpha(a)|: \alpha \in \Sp{\rho}{A}\} \geq \epsilon$ and $a_0:= a/k$. The claim follows for $a_0$. 
\end{proof}
}

\ma{
\begin{rem}\label{r:non-unital}
Every commutative seminormed algebra $(A, \rho)$  can be embedded into the unital algebra $A_1 := A\oplus\fld$ with multiplication 
$(a,\lambda)(b,\gamma)=(ab+\gamma a+\lambda b,\lambda\gamma)$. Defining $\rho_1(a+\lambda)=\rho(a)+|\lambda|$ we also 
obtain a seminorm on $A_1$ which makes the natural embedding $a\mapsto (a,0)$ continuous.  For each $\alpha \in \X{A}$, define the extension $\alpha'(a,\lambda)= \alpha(a) + \lambda$ which is obviously an element in $\X{A_1}$. So one can regard $\X{A}$ as a subset of $\X{A_1}$. 
Regarding $\fld$ as a commutative algebra, we know that $\X{\fld}$ has only one element which is  the identity map. This leads to the fact that  $\X{A_1}\setminus\X{A}$ consists 
of exactly one element which is zero on $A$ and maps $(a,\lambda)$ to $\lambda$ (denoted by $\hat{\infty}$). Clearly,
$\hat{\infty}\in\Sp{\rho_1}{A_1}$, therefore $A$ is a closed maximal ideal of $A_1$. 
 Moreover, if $\Sp{\rho}{A}$ is not compact,  $\Sp{\rho_1}{A_1}$ is the one-point compactification of $\Sp{\rho}{A}$.
\end{rem}
}
\vskip0.5em

\ma{
Every element $a\in A$ induces a map $\map{\hat{a}}{\X{A}}{\fld}$ defined by $\hat{a}(\alpha) := \alpha(a)$ for each $\alpha \in \X{A}$. Every $*$-algebra homomorphism $\map{\phi}{A}{B}$ induces a mapping $\map{\phi_*}{\X{B}}{\X{A}} \cup\{\bf 0\}$ defined by $\phi_*(\beta)=\beta \circ \phi$ for each $\beta \in  \X{B}$.
}

\ignore{
Every element $a\in A$ induces a map $\map{\hat{a}}{\X{A}}{\fld}$ defined by 
\[
\begin{array}{lccl}
	\hat{a} : & \X{A} & \longrightarrow & \fld\\
	 & \alpha & \mapsto & \alpha(a),
\end{array}
\]
which is the projection into the $a^{\rm th}$ component of $\fld^A$ that is continuous. Therefore we always have an
algebra homomorphism $\map{\hat{ }}{A}{\Cnt{}{\X{A}}}$ {where $\Cnt{}{\X{A}}$ denotes the set of all $\fld$-valued continuous functions on
$\X{A}$ where $\X{A} \cup \{0\}$ is equipped with the Alexandroff topology}. If $\phi$ is surjective then $\phi_*$ is a topological embedding.
}

\ignore{
\begin{rem}\label{one-point-cmptfctn}
Let $A$ and $B$ be $\fld$-algebras. Every $\ast$-algebra homomorphism $\map{\phi}{A}{B}$ induces a continuous map 
\[
\begin{array}{lccl}
	\phi_* : & \X{B} & \longrightarrow & \X{A}\cup\{{\bf0}\}\\
	 & \beta & \mapsto & \beta\circ\phi,
\end{array}
\]
where ${\bf0}$ is the zero map on $A$ and $\X{A}\cup\{{\bf0}\}$ is equipped with the {\it Alexandroff topology} \cite[3.15]{Gil-Jer}.
If $\phi$ is surjective then $\phi_*$ is a topological embedding.
\end{rem}
}
%% Epimorphism??

Suppose that $B$ is equipped with a seminorm $\rho$. The homomorphism $\phi$ induces a seminorm $\rho_{\phi}$ on $A$ 
defined by $\rho_{\phi}(a) = \rho(\phi(a))$. If $\rho$ is submultiplicative, then so is $\rho_{\phi}$.
The map $\phi$ as a homomorphism between seminormed algebras $(A, \rho_{\phi})$ and $(B, \rho)$ is continuous. Therefore
$\phi_*$ maps $\Sp{\rho}{B}$ continuously into $\Sp{\rho_{\phi}}{A}$.
Here  we are mainly interested in the case where $B$ is a subalgebra of $\fld^X$ for a non-empty set $X$. This generally enables 
us to realize $\Sp{}{A}$ relative to $X$.

Let $\rho$ be a submultiplicative quasi-norm on $\fld^X$ with $\rho(1)\geq1$. There is a natural map $\map{e}{X}{\X{\fld^X}}$
which, to every $x\in X$, assigns the \textit{evaluation map} $\map{e_x}{\fld^X}{\fld}$, defined by $e_x(f):=f(x)$. 
This is clear that $e_x\in\X{\fld^X}$. We denote the set of all $\rho$-continuous evaluations by $X_{\rho} (=e X\cap\Sp{\rho}{\Bnd{\rho}{X}})$.

Let  $\map{\iota}{A}{\Bnd{\rho}{\fld^X}}$ \ma{be} an algebra homomorphism. By abuse of notation, we use  $\iota_*$  to  denote  the  induced map $\map{\iota_*|_X}{(X,\tau)}{\Sp{\rho}{A}}$.

\begin{thm}\label{jst-bndd}
Let  $(A,\rho)$ be a commutative seminormed algebra and let $\map{\iota}{A}{\Bnd{\rho}{\fld^X}}$ be a homomorphism. Define $\rho_{\iota}$ on $A$ as before.
Then $\iota_*(X_{\rho})$ is dense in $\Sp{\rho_{\iota}}{A}$ if and only if there exists $D>0$ such that
\[
	\rho_{\iota}(a)\leq D\cdot\sup_{x\in X_{\rho}} |e_x(\iota a)|,
\]
for all $a\in A$.
\end{thm}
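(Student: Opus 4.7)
The approach is to realise $A$ inside $C(K)$ with $K:=\Sp{\rho_\iota}{A}$ via the Gelfand transform and extract both directions from Stone--Weierstrass. By Remark~\ref{r:non-unital} I would unitise $A$ if needed, so that Corollary~\ref{unit-cmpt} makes $K$ compact Hausdorff. Set $\sigma(a):=\sup_{x\in X_\rho}|e_x(\iota a)|$; Lemma~\ref{cts-hom} applied to the characters $\iota_*(e_x)\in\Sp{\rho_\iota}{A}$ (each satisfying $|\iota_*(e_x)(a)|=|\iota(a)(x)|\le\rho_\iota(a)$) gives $\sigma\le\rho_\iota$ for free. The Gelfand image $\hat A=\{\hat a:a\in A\}\subseteq C(K)$ is a unital conjugate-closed subalgebra that separates points of $K$, so Stone--Weierstrass makes it uniformly dense in $C(K)$.

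For the ``if'' direction I would assume $\rho_\iota\le D\sigma$ and argue by contradiction: suppose some $\alpha_0\in K$ lies outside the closure $F$ of $\iota_*(X_\rho)$. Urysohn supplies $g\in C(K)$ with $g(\alpha_0)=1$ and $g|_F\equiv 0$; Stone--Weierstrass then yields $a\in A$ with $\|\hat a-g\|_\infty<\varepsilon$, so $|\alpha_0(a)|>1-\varepsilon$ while $|\iota(a)(x)|<\varepsilon$ for every $x\in X_\rho$. Lemma~\ref{cts-hom} gives $\rho_\iota(a)\ge|\alpha_0(a)|>1-\varepsilon$, whereas the hypothesis forces $\rho_\iota(a)\le D\sigma(a)<D\varepsilon$; any $\varepsilon<1/(D+1)$ then yields a contradiction.

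For the ``only if'' direction, density combined with continuity of each $\hat a$ on $K$ forces $\sup_{\alpha\in K}|\alpha(a)|=\sigma(a)$, so the spectral radius of $a$ in the Banach-algebra completion $\tilde A$ of $A/\ker\rho_\iota$ equals $\sigma(a)$. I would extend $\sigma$ by continuity to $\tilde A$: density pins the Jacobson radical $\{a:r(a)=0\}$ down to $\ker\sigma$, so $\tilde A/\ker\sigma$ is a semisimple commutative Banach algebra carrying both the quotient of $\rho_\iota$ and the submultiplicative norm $\sigma$. Johnson's uniqueness-of-norm theorem (equivalently, the open mapping theorem applied to the identity map between the two completions) then produces a uniform constant $D$ with $\rho_\iota\le D\sigma$.

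The main obstacle is the ``only if'' direction: upgrading the purely weak-$\ast$ density of $\iota_*(X_\rho)$ to a \emph{uniform} norm inequality requires verifying that the relevant completions behave well enough --- specifically that the natural map between the $\rho_\iota$- and $\sigma$-completions is a bijection between Banach algebras --- so that an open mapping / uniqueness-of-norm argument actually delivers the constant $D$.
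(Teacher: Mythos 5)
Your ``if'' direction is sound and is essentially the paper's own argument: separate a putative $\alpha_0\in K\setminus\cl{}{\iota_*(X_\rho)}$ by a continuous function, approximate it by some $\hat a$ via Stone--Weierstrass, and use the hypothesis $\rho_\iota\le D\sigma$ together with $|\alpha_0(a)|\le\rho_\iota(a)$ to reach a contradiction. This matches the paper step for step (the paper tacitly assumes $A$ unital so that $K$ is compact; your explicit appeal to Remark~\ref{r:non-unital} and Corollary~\ref{unit-cmpt} is the honest way to say the same thing).

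The ``only if'' direction has a genuine gap, and it is exactly the one you flag at the end. Johnson's uniqueness-of-norm theorem (equivalently the open mapping theorem) compares two \emph{complete} norms on the \emph{same} algebra, whereas $\sigma$ is in general not complete on $\tilde A/\ker\sigma$ and the canonical map from the $\rho_\iota$-completion to the $\sigma$-completion need not be surjective; no constant $D$ comes out. Moreover the gap cannot be closed, because the implication is false as stated: take $X=[0,1]$, let $\rho$ be the Lipschitz quasi-norm on $\fld^X$, $\rho(f)=\norm{X}{f}+\sup_{x\neq y}|f(x)-f(y)|/|x-y|$, which is submultiplicative, so $A:=\Bnd{\rho}{\fld^X}=\mathrm{Lip}[0,1]$ with $\iota$ the identity. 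Every character of this Banach algebra is a point evaluation, so $\iota_*(X_\rho)$ equals $\Sp{\rho_\iota}{A}$ and is trivially dense, yet $\rho(f)\le D\,\norm{X}{f}$ fails along $f_n(x)=\sin(nx)$. For what it is worth, the paper's own proof of this direction is also defective: from continuity of the Gelfand map it gets $\sup_\beta|\hat a(\beta)|\le C\rho_\iota(a)$ and then uses this as though it read $\rho_\iota(a)\le C\sup_\beta|\hat a(\beta)|$; density only yields $\sup_{x\in X_\rho}|e_x(\iota a)|=\sup_\beta|\beta(a)|$, which bounds the wrong side of the desired inequality. So your instinct that this direction requires an open-mapping-type input not present in the hypotheses is correct, and that is precisely why neither your argument nor the paper's delivers the constant $D$.
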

\begin{proof}
\ma{Let $\Cnt{}{\Sp{\rho_{\iota}}{A}}$ denote the space of all continuous functions on $\Sp{\rho_{\iota}}{A}$.}
The Gelfand map $\map{\hat{ }}{A}{\Cnt{}{\Sp{\rho_{\iota}}{A}}}$ is continuous and hence for some $C>0$,
\[
	\sup_{\beta\in\Sp{\rho_{\iota}}{A}}|\hat{a}(\beta)|\leq C\cdot\rho_{\iota}(a),\quad\forall a\in A.
\]
\begin{itemize}
\item[($\Rightarrow$)]{
Since $\iota_*(X_{\rho_{\iota}})$ is dense in $\Sp{\rho_{\iota}}{A}$ we have
\[
	\sup_{x\in X_{\rho}}|e_{x}(\iota a)| = \sup_{\beta\in\Sp{\rho_{\iota}}{A}}|\beta(a)|,
\]
and it suffices to take $D=C$.
}
\item[($\Leftarrow$)]{
In contrary, suppose that $\alpha\in\Sp{\rho_{\iota}}{A}\setminus\cl{}{\iota_*(X_{\rho})}\neq\emptyset$.
There exists $f\in\Cnt{}{\Sp{\rho_{\iota}}{A}}$ such that $f(\alpha)=1$ and $f|_{\iota_*(X_{\rho})}=0$.
Since $\hat{A}$ separates \ma{points} of $\Sp{\rho_{\iota}}{A}$, $\hat{a}\in\Cnt{}{\Sp{\rho_{\iota}}{A}}$, and 
$\Sp{\rho_{\iota}}{A}$ is compact, by Stone-Weierstrass theorem, it is dense in $\Cnt{}{\Sp{\rho_{\iota}}{A}}$.
Therefore, for $\epsilon>0$, there is $a_{\epsilon}\in A$ with $\norm{}{f-\hat{a_{\epsilon}}}<\epsilon$.
Take an $\epsilon>0$ such that $\frac{1-\epsilon}{\epsilon}>CD$. 
Then $|f(\alpha)-\alpha(a_{\epsilon})|=|1-\alpha(a_{\epsilon})|<\epsilon$ or $1-\epsilon<|\alpha(a_{\epsilon})|<1+\epsilon$.
Also $|f(\iota_*e_{x})-e_{x}(\iota a_{\epsilon})|=|0-\iota a_{\epsilon}(x)|<\epsilon$ for all $x\in X_{\rho}$. Now
\[
\begin{array}{lcl}
\sup_{\beta\in\Sp{\rho_{\iota}}{A}}|\beta(a_{\epsilon})| & \leq & C\rho_{\iota}(a_{\epsilon})\\
 & \leq & CD\sup_{x\in X_{\rho}}|e_{x}(\iota a)|\\
 & \leq & CD\epsilon\\
 & < & 1-\epsilon,
\end{array}
\]
and hence $|\alpha(a_{\epsilon})|<1-\epsilon$, a contradiction.
}
\end{itemize}
\end{proof}
The clear implication of Theorem \ref{jst-bndd} is that if one is   to realise a unital commutative algebra 
as a subalgebra of $(\fld^X,\rho)$ the natural choice for $\rho$ is the $\sup$-norm over $X$ which is defined by
\begin{equation}\label{eq:sup-norm}
	\norm{X}{f} = \sup_{x\in X}|f(x)|.
\end{equation}
We denote $\Bnd{\norm{X}{.}}{\fld^X}$ by $\ell^{\infty}(X)$. According to Theorem \ref{jst-bndd} the image of $X$ under the 
map $x\mapsto e_x$ is dense in $\Sp{\|\cdot\|_X}{\ell^{\infty}(X)}$ and also for $\map{\iota}{A}{\ell^{\infty}(X)}$,  we have 
$\cl{\norm{X\iota}{}}{\iota_*(X_{\norm{X}{}})}=\Sp{\norm{X\iota}{}}{A}$. 

Suppose that $\tau$ is a topology on $X$ and denote the set of all 
$\tau$-continuous bounded $\fld$-valued functions on $X$ by $\Cnt{b}{X,\tau}$ or simply by $\Cnt{b}{X}$, if there is no 
risk of confusion.
Let  $\map{\iota}{A}{\ell^{\infty}(X)}$ an algebra homomorphism.
Then one can  show that the  induced map $\map{\iota_*|_X}{(X,\tau)}{\Sp{\norm{X\iota}{}}{A}}$ is continuous if and only if 
$\iota A\subseteq\Cnt{b}{X}$. 

It is well known that if $(X,\tau)$ is a completely regular Hausdorff space, then $\Sp{\norm{X\iota}{}}{\Cnt{b}{X}}$ is the
Stone-\v{C}ech compactification of $(X,\tau)$. Moreover, every Hausdorff compactification of $(X,\tau)$ is homeomorphic to the
spectrum of a unital subalgebra of $\Cnt{b}{X}$. In the next section we study the algebra of bounded measurable functions
for a measurable structure on $X$.

\section{Measurable structures on $X$}\label{meas-struct}
Let $\Sigma$ be a $\sigma$-algebra of subsets of $X$. % \textit{containing all singletons}. 
Let $\Meas{b}{X,\Sigma}$ be the algebra of all bounded $\Sigma$-measurable functions on $(X, \Sigma)$. %Since $\Sigma$ contains all singletons, 
Suppose that $\Meas{b}{X,\Sigma}$ separates points of $X$. Hence $X$ sits inside \ma{$\Sp{\|\cdot\|_X}{\Meas{b}{X,\Sigma}}$},  injectively and its image is dense. 
Although we are assuming that $\Meas{b}{X,\Sigma}$ separates points of $X$, this does not imply that $\Sigma$ contains singletons \ma{as we see in the following example.} 
\begin{exm}
Recall that a topological space $(X,\tau)$ is called a $T_0$ space, if for every $x\neq y\in X$, either $x\not\in\cl{\tau}{\{y\}}$ or
$y\not\in\cl{\tau}{\{x\}}$. Then characteristic functions of open sets clearly separate points of $X$. Let $\omega_1$ be the first
uncountable ordinal and $X=\omega_1+1$. The family of sets $R_a:=\{x\in X: x>a\}$ form a basis for a topology on $X$. This topology is
evidently $T_0$ and satisfies first axiom of countability at every point except $\omega_1$. 
Although \ma{$\{\omega_1\}= \bigcap_{\omega_1>a}R_a$},  \ma{every countable intersection of sets $R_a$ for $a < \omega_1$} contains ordinals smaller 
than $\omega_1$. Thus $\{\omega_1\}$ does not belong to   the $\sigma$-algebra generated by 
$\{R_a:a\in X\}$, denoted by $\Sigma_r$, while $\Meas{b}{X,\Sigma_r}$ separates points of $X$. Note that the topology of $\omega_1+1$ in 
this case \ma{is} not first countable.  \ma{Singletons  always  belong to       the $\sigma$-algebra of Borel subsets of  first countable spaces. }
\end{exm}
We denote \ma{$\Sp{\| \cdot\|_X}{\Meas{b}{X,\Sigma}}$} by $\xi_{\Sigma}X$. \ma{Note that since $\Meas{b}{X, \Sigma}$ separates the points of $X$,
\me{we can think of $\xi_{\Sigma}X$ as a compactification of $X$,}
i.e., it is a compact Hausdorff space  such that there is an injection $\map{\psi}{X}{\xi_{\Sigma}X}$ such that $\psi(X)$ is a dense subspace of $\xi_{\Sigma}X$. Further,  for every bounded $\Sigma$-measurable function $f$ on $X$, the function $f \circ \psi^{-1}$ is continuously extendible over $\xi_{\Sigma}X$. Also, $\xi_{\Sigma}X$  is unique  (up to a homeomorphism)  with this property in this sense that for every other   compactification of $X$, say $\gamma X$, on which elements of $\Meas{b}{X,\Sigma}$ are continuously extendible, there is 
a  continuous map $\map{\iota}{\gamma X}{\xi_{\Sigma}X}$ agreeing on the images of $X$ in $\xi_\Sigma X$ and $\gamma X$. }

Let $\chi_E$ be the characteristic function of $E$ as on $X$ for $E\in\Sigma$. Denoting its continuous extension to $\xi_{\Sigma}X$ 
with $\tilde{\chi}_E$ we have:
\[
	\tilde{\chi}_E^2=\tilde{\chi_E^2}=\tilde{\chi}_E;
\]
thus it ranges over $\{0,1\}$, which implies that $\tilde{\chi}_E$ itself must be the characteristic function of a set, say
$\tilde{E}$ in $\xi_{\Sigma}X$.	
\begin{lemma}\label{img-of-meas}
Let $E \in \Sigma$. Then $\cl{}{E}=\tilde{E}$  where $\cl{}{E}$ is the closure of $E$ in $\xi_\Sigma X$.
\end{lemma}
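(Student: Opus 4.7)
The plan is to pin down $\tilde E$ concretely as the clopen set $\tilde\chi_E^{-1}(\{1\})$ in $\xi_{\Sigma}X$ and then prove the two inclusions $\overline E\subseteq\tilde E$ and $\tilde E\subseteq\overline E$ separately. Throughout I will identify $X$ with its image $\psi(X)$ in $\xi_{\Sigma}X$, so that $e_x$ and $x$ are conflated and $\chi_E$ agrees with $\tilde\chi_E$ on $X$.

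For the inclusion $\overline E\subseteq\tilde E$, I first observe that because $\tilde\chi_E$ is continuous on $\xi_{\Sigma}X$ and the preceding paragraph shows $\tilde\chi_E$ takes only the values $0$ and $1$, the set $\tilde E=\tilde\chi_E^{-1}(\{1\})$ is closed (in fact clopen). Since $\tilde\chi_E$ extends $\chi_E$ and $\chi_E\equiv 1$ on $E$, we have $E\subseteq\tilde E$; taking closures and using that $\tilde E$ is already closed gives $\overline E\subseteq\tilde E$.

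For the reverse inclusion $\tilde E\subseteq\overline E$, fix $\alpha\in\tilde E$, i.e.\ $\tilde\chi_E(\alpha)=\alpha(\chi_E)=1$. Density of $X$ in $\xi_{\Sigma}X$ supplies a net $(x_i)_{i\in I}\subseteq X$ with $x_i\to\alpha$. By continuity of $\tilde\chi_E$,
\[
\chi_E(x_i)=\tilde\chi_E(x_i)\longrightarrow\tilde\chi_E(\alpha)=1.
\]
Since $\chi_E$ takes values in the discrete set $\{0,1\}$, this forces $\chi_E(x_i)=1$ eventually, so the tail of the net lies in $E$. That tail still converges to $\alpha$, hence $\alpha\in\overline E$.

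The argument is essentially soft; the only point requiring a little care is the bookkeeping between $\alpha$ viewed as a character of $\Meas{b}{X,\Sigma}$ and $\alpha$ viewed as a point of the compactification $\xi_{\Sigma}X$, so that evaluating $\tilde\chi_E$ at $\alpha$ really does coincide with $\alpha(\chi_E)$. Once that identification is in place, the discreteness of the range $\{0,1\}$ of $\chi_E$ does all of the work in converting convergence of $\tilde\chi_E(x_i)\to 1$ into eventual membership in $E$, which is the crux of the reverse inclusion.
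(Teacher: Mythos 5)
Your proof is correct, and the forward inclusion $\overline{E}\subseteq\tilde{E}$ is identical to the paper's. For the reverse inclusion, however, you take a genuinely different route. The paper argues contrapositively and pointwise: given $z\notin\overline{E}$, it picks a basic neighbourhood $\tilde{f}^{-1}(I)$ of $z$ disjoint from $E$, sets $F=f^{-1}(I)\in\Sigma$, and uses multiplicativity of the extension --- $\chi_E\cdot\chi_F=0$ forces $\tilde{\chi}_E\cdot\tilde{\chi}_F=0$, and since $\tilde{\chi}_F(z)=1$ this gives $\tilde{\chi}_E(z)=0$. You instead argue directly: density of $X$ in $\xi_{\Sigma}X$ yields a net $(x_i)$ in $X$ converging to $\alpha\in\tilde{E}$, continuity gives $\tilde{\chi}_E(x_i)\to 1$, and discreteness of the range $\{0,1\}$ forces the tail of the net into $E$, so $\alpha\in\overline{E}$. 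Your argument is softer and shorter: it needs only the density of $X$ (already established via Theorem \ref{jst-bndd}) and the continuity of the single function $\tilde{\chi}_E$, with no appeal to the structure of basic open sets or to the algebra homomorphism property of the extension. The paper's argument, by contrast, makes explicit how the measurable structure $\Sigma$ controls the basic neighbourhoods of points of $\xi_{\Sigma}X$, a mechanism it reuses later (e.g.\ in the discussion of halos); it also avoids nets entirely. Your one genuine obligation --- that evaluating $\tilde{\chi}_E$ at a point of $X$ returns $\chi_E(x)$, i.e.\ $\tilde{\chi}_E(e_x)=e_x(\chi_E)=\chi_E(x)$ --- is exactly the bookkeeping you flag, and it holds by the definition of the Gelfand transform, so there is no gap.
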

\begin{proof}
It is clear that $\tilde{E}=\tilde{\chi}_E^{-1}(\{1\})$ is closed and $E\subseteq\tilde{E}$. Thus $\cl{}{E}\subseteq\tilde{E}$.
\ignore{
Let $\mathcal{F}$ be the ultrafilter containing neighbourhoods of $z\in\tilde{E}$ in $\xi_{\Sigma}X$ \cite[\S12]{willard}. 
If $E\not\in\mathcal{F}$,
}
\ma{ If $z \notin \overline{E}$,  then for an open neighbourhood $U$ of $z$ we have $U\cap E=\emptyset$.}  Therefore there is a function 
$f\in\Meas{b}{X,\Sigma}$ and an open interval $I$ in $\reals$ such that $z\in\tilde{f}^{-1}(I)\subseteq U$. Let $F=f^{-1}(I)\in\Sigma$,
then $E\cap F=\emptyset$, so $\chi_E\cdot\chi_F=0$ and $\tilde{\chi}_E\cdot\tilde{\chi}_F=0$. Since $\tilde{\chi}_F(z)=1$ the later
equation implies $\tilde{\chi}_E(z)=0$. This contradicts assumption $z\in\tilde{E}$, therefore $\tilde{E}=\cl{}{E}$.
\end{proof}
Using the above lemma, we investigate some properties of $X$ as a subspace of $\xi_{\Sigma}X$.
\begin{crl}\label{MCmpt}~
\begin{enumerate}
	\item{\label{MCmptI} $\cl{}{E}$ is a clopen subset of $\xi_{\Sigma}X$ for every $E\in\Sigma$;}
	\item{\label{MCmptIII} $\tilde{\Sigma}:=\{\tilde{E}:E\in\Sigma\}$ \ma{forms} a basis for the topology of $\xi_{\Sigma}X$;}
\end{enumerate}
In addition, if $\Sigma$ contains all singletons, then
\begin{enumerate}
	\setcounter{enumi}{2}
	\item{\label{MCmptII} $X$ is an open dense subspace of $\xi_{\Sigma}X$ whose subspace topology is discrete;}
	\item{\label{MCmptIV} For a subset $Y\subset X$, $\cl{}{Y}=Y$ if and only if $Y$ is finite.}
\end{enumerate}
\end{crl}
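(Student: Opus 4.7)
The plan is to dispatch the four claims in the given order, each exploiting Lemma \ref{img-of-meas} ($\overline{E}=\tilde{E}$) together with the continuous-function calculus on $\xi_{\Sigma}X$.

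For (\ref{MCmptI}), the extension $\tilde{\chi}_E$ is continuous and takes values in $\{0,1\}$ (which is discrete in $\fld$), so $\tilde{E}=\tilde{\chi}_E^{-1}(\{1\})$ is both open and closed in $\xi_{\Sigma}X$; by the lemma this is $\overline{E}$.

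For (\ref{MCmptIII}), I would show that every basic open set in $\xi_{\Sigma}X$ (with the Gelfand topology) contains some $\tilde{E}$ around each of its points. A basic open set has the form $\tilde{f}^{-1}(V)$ with $f\in\Meas{b}{X,\Sigma}$ and $V\subseteq\fld$ open; given $z\in\tilde{f}^{-1}(V)$, pick $\epsilon>0$ with $B(\tilde{f}(z),3\epsilon)\subseteq V$. The standard fact that $\Sigma$-simple functions are $\|\cdot\|_X$-dense in $\Meas{b}{X,\Sigma}$ produces a simple $s=\sum_{i=1}^{n}\alpha_i\chi_{E_i}$, with the $E_i\in\Sigma$ forming a finite partition of $X$, such that $\|f-s\|_X<\epsilon$. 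Disjointness and $\sum_i\chi_{E_i}=1$ lift to $\tilde{\chi}_{E_i}\tilde{\chi}_{E_j}=0$ for $i\neq j$ and $\sum_i\tilde{\chi}_{E_i}=1$, so $\{\tilde{E}_i\}$ is a clopen partition of $\xi_{\Sigma}X$. Then $z\in\tilde{E}_j$ for a unique $j$, $\tilde{s}\equiv\alpha_j$ on $\tilde{E}_j$, and the density estimate extends to $\|\tilde{f}-\tilde{s}\|_{\xi_{\Sigma}X}<\epsilon$, giving $\tilde{f}(\tilde{E}_j)\subseteq B(\alpha_j,\epsilon)\subseteq B(\tilde{f}(z),3\epsilon)\subseteq V$, i.e. $z\in\tilde{E}_j\subseteq\tilde{f}^{-1}(V)$. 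This step, leaning on the simple-function approximation plus the algebraic behaviour of the extensions $\tilde{\chi}_E$, is the main technical obstacle of the corollary.

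Now assume $\Sigma$ contains all singletons. For (\ref{MCmptII}), the key observation is that $\xi_{\Sigma}X$ is compact Hausdorff, so every singleton in it is closed; hence for $x\in X$ the closure $\overline{\{x\}}$ equals $\{x\}$, and Lemma \ref{img-of-meas} together with (\ref{MCmptI}) gives that $\{x\}=\tilde{\{x\}}$ is a clopen subset of $\xi_{\Sigma}X$. Consequently $X=\bigcup_{x\in X}\{x\}$ is open in $\xi_{\Sigma}X$, and the subspace topology on $X$ is discrete. For (\ref{MCmptIV}), if $Y\subseteq X$ is finite then it is a finite union of closed singletons, hence closed. Conversely, if $Y$ is infinite, choose a countably infinite $\{y_n\}_{n\in\naturals}\subseteq Y$ of distinct points; compactness of $\xi_{\Sigma}X$ forces an accumulation point $z$ of this set. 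No $x\in X$ can be such a $z$, because by the previous item $\{x\}$ is an open neighbourhood of $x$ containing at most one $y_n$; therefore $z\in\xi_{\Sigma}X\setminus X\subseteq\xi_{\Sigma}X\setminus Y$, while $z\in\overline{\{y_n:n\in\naturals\}}\subseteq\overline{Y}$, so $\overline{Y}\supsetneq Y$.
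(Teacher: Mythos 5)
Your proof is correct, and parts (\ref{MCmptI}), (\ref{MCmptII}) and (\ref{MCmptIV}) run essentially as in the paper: clopenness of $\tilde E$ from $\tilde\chi_E=\tilde\chi_E^2$, openness and discreteness of $X$ from clopen singletons, and the impossibility of an infinite closed $Y\subseteq X$ from compactness (the paper extracts the contradiction from the open cover $\{\{x\}:x\in Y\}$ of the compact set $\cl{}{Y}=Y$ having no finite subcover, you from limit-point compactness; these are the same argument). Part (\ref{MCmptIII}) is where you genuinely diverge. The paper's proof is a one-liner: the sets $\tilde f^{-1}[0,1]$, $f\in\Meas{b}{X,\Sigma}$, form a basis for the closed sets of $\xi_\Sigma X$, each equals $\tilde E$ for $E=f^{-1}[0,1]\in\Sigma$, and one passes to complements (legitimate since $\tilde\chi_{X\setminus E}=1-\tilde\chi_E$, so $\widetilde{X\setminus E}=\xi_\Sigma X\setminus\tilde E\in\tilde\Sigma$). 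You instead approximate $f$ uniformly by a $\Sigma$-simple function $s$ subordinate to a finite measurable partition, use multiplicativity and unitality of the extension to see that the $\tilde E_i$ form a clopen partition of $\xi_\Sigma X$, and exhibit a piece $\tilde E_j$ with $z\in\tilde E_j\subseteq\tilde f^{-1}(V)$; the norm estimate $\|\tilde f-\tilde s\|_{\xi_\Sigma X}\le\|f-s\|_X$ is justified by Lemma \ref{cts-hom}. Your route is longer but more self-contained: it makes explicit the refinement step that the paper buries in the unproved assertion about the basis of closed sets. Two small repairs are needed. First, the sets $\tilde f^{-1}(V)$ are a priori only a subbasis of the Gelfand (product) topology; either intersect the finitely many clopen sets your argument produces --- harmless, since $\tilde E\cap\tilde F=\widetilde{E\cap F}$ keeps you inside $\tilde\Sigma$ --- or observe that for a unital $\ast$-algebra these preimages already form a basis. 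Second, in (\ref{MCmptII}) you never address density of $X$ in $\xi_\Sigma X$, which is part of the claim; it follows from Theorem \ref{jst-bndd} (equivalently, the standing observation at the start of Section \ref{meas-struct}) and should be cited rather than omitted.
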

\begin{proof}
\eqref{MCmptI} Since $\cl{}{E}=\tilde{E}=\tilde{\chi}_{E}^{-1}(\{1\})=\tilde{\chi}_{E}^{-1}(\frac{1}{2},\infty)$ and $\tilde{\chi}_E$ 
is continuous, we conclude  that $\tilde{E}$ is clopen.

\eqref{MCmptIII} The family $\{\tilde{f}^{-1}[0,1]: f\in\Meas{b}{X,\Sigma}\}$ forms a basis for the closed sets of $\xi_{\Sigma}X$.
Note that $E=f^{-1}[0,1]\in\Sigma$ and $\tilde{E}=\cl{}{E}=\tilde{f}^{-1}[0,1]$ which is clopen by \eqref{MCmptI} and the conclusion follows.

\eqref{MCmptII} By \eqref{MCmptI}, the closure of every element of $\Sigma$ is open in
$\xi_{\Sigma}X$. Since the topology of $\xi_{\Sigma}X$ is Hausdorff and   $\Sigma$ contains all singletons, singletons are closed. Therefore $\{x\}$ is a clopen for every $x\in X$
and hence $X$ is open in $\xi_{\Sigma}X$, dense by Theorem \ref{jst-bndd} and the subspace topology on $X$ is discrete.

\eqref{MCmptIV} If $Y$ is finite, then since the topology of $\xi_{\Sigma}X$ is Hausdorff, it is also closed. Let $Y$ be an arbitrary 
subset of $X$. The set $\cl{}{Y}\subseteq\xi_{\Sigma}X$ is compact. If $Y=\cl{}{Y}$, then $\{\{x\}:~x\in Y\}$ is an open cover of $Y$ 
which will not have a finite subcover, if $Y$ is infinite.
\end{proof}

\begin{rem}\label{r:xi-Sigma-not-metrizable}
Let $ \Sigma $ be a $\sigma$-algebra of subsets of an infinite set $X$.  If there are infinitely many disjoint sets in $\Sigma$, 
then $\Meas{b}{X,\Sigma}$ is not separable. The proof is similar to the classical proof of the fact that $\ell^\infty(\naturals)$ is not separable. 
Hence, in this case $\xi_{\Sigma}X$ is not metrizable. (It is classically known that for a compact space $X$, $\Cnt{0}{X}$ is separable if and only if $X$ is metrizable (\cite[Theorem 2.4]{chou}). )
\end{rem}

By Lemma \ref{MCmpt}.\eqref{MCmptIII}, $\xi_{\Sigma}X$ is  {totally disconnected}. One can prove that $\{\tilde{E}:E\in\Sigma\}$ 
contains all clopen subsets of $\xi_{\Sigma}X$. To see this suppose that $Y\subseteq\xi_{\Sigma}X$ is clopen. Since $\xi_{\Sigma}X$ is compact, 
so is $Y$. By \ref{MCmpt}.\eqref{MCmptIII}, $Y=\bigcup_{i\in I}\tilde{E}_i$ as an open set, for a family $\{E_i\}_{i\in I}\subset\Sigma$.
Therefore $Y=\tilde{E}_{i_1}\cup\dots\cup\tilde{E}_{i_n}$ for $i_1,\dots,i_n\in I$, which belongs to $\tilde{\Sigma}$.

A topological space is called \textit{extremely disconnected} if the closure of every open set is open. The following shows when 
$\xi_{\Sigma}X$ is extremely disconnected.  For the relation between Boolean algebras and extremely disconnected spaces see 
\cite[\S 3.5]{Johnstone} or \cite[22.4]{Sikorski}. Commutative algebras with extremely disconnected Gelfand spectrum are forming the 
commutative class of AW$^*$-algebras where $\fld=\cplx$.

An algebra of sets is said to be \textit{complete} if it is closed under arbitrary union and hence intersection

\begin{prop}
If $\xi_{\Sigma}X$ is extremely disconnected, then $\tilde{\Sigma}$ is complete.
Conversely, if $\Sigma$ is complete, then $\xi_{\Sigma}X$ is extremely disconnected.
\end{prop}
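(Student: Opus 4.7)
The strategy is to exploit three facts already in place: Lemma \ref{img-of-meas} (so $\tilde{E}=\cl{}{E}$), Corollary \ref{MCmpt}(1)--(2) (so $\tilde{\Sigma}$ is a basis of clopens), and the observation made immediately before the proposition that \emph{every} clopen subset of $\xi_{\Sigma}X$ is of the form $\tilde{F}$ for some $F\in\Sigma$. With these, both implications reduce to manipulating closures of unions. A subtlety worth flagging at the outset: a set-theoretic union of clopens is open but typically not closed, so ``complete'' for $\tilde{\Sigma}$ has to be read as closure under arbitrary suprema in the Boolean-algebra sense, i.e.\ the supremum $\cl{}{\bigcup_i \tilde{E}_i}$ lies in $\tilde{\Sigma}$; the implication $\Sigma$ complete $\Rightarrow\xi_{\Sigma}X$ extremely disconnected then uses $\Sigma$ in its set-theoretic sense.

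For the converse direction, assume $\Sigma$ is complete. Let $U\subseteq\xi_{\Sigma}X$ be an arbitrary open set. By Corollary \ref{MCmpt}(2), $U=\bigcup_{i\in I}\tilde{E}_i$ for some family $\{E_i\}_{i\in I}\subseteq\Sigma$. Put $E:=\bigcup_{i\in I}E_i$, which belongs to $\Sigma$ by completeness. Since $E_i\subseteq E$, monotonicity of closure together with Lemma \ref{img-of-meas} gives $\tilde{E}_i\subseteq\tilde{E}$ for every $i$, hence $U\subseteq\tilde{E}$ and $\cl{}{U}\subseteq\tilde{E}$. Conversely, $E=\bigcup_i E_i\subseteq\bigcup_i\tilde{E}_i=U$, so $\tilde{E}=\cl{}{E}\subseteq\cl{}{U}$. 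Thus $\cl{}{U}=\tilde{E}$, which is clopen by Corollary \ref{MCmpt}(1); this shows $\cl{}{U}$ is open and establishes extreme disconnectedness.

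For the forward direction, assume $\xi_{\Sigma}X$ is extremely disconnected and let $\{\tilde{E}_i\}_{i\in I}\subseteq\tilde{\Sigma}$ be arbitrary. The set $V:=\bigcup_i\tilde{E}_i$ is open, so by hypothesis $\cl{}{V}$ is open, hence clopen. By the remark preceding the proposition, every clopen subset of $\xi_{\Sigma}X$ has the form $\tilde{F}$ for some $F\in\Sigma$, so $\cl{}{V}=\tilde{F}$ with $F\in\Sigma$. This $\tilde{F}$ is visibly the least upper bound of $\{\tilde{E}_i\}_{i\in I}$ inside the Boolean algebra $\tilde{\Sigma}$: it contains every $\tilde{E}_i$, and any clopen containing each $\tilde{E}_i$ contains $V$ and therefore its closure. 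Hence $\tilde{\Sigma}$ is complete. The main obstacle, as noted above, is conceptual rather than technical—making sure the two sides of the equivalence use the correct notion of ``arbitrary union,'' after which the argument is a clean two-line manipulation in each direction.
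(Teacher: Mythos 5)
Your argument is correct, and it follows the paper's overall route (clopen basis $\tilde{\Sigma}$, Lemma \ref{img-of-meas}, and the observation that every clopen subset of $\xi_{\Sigma}X$ lies in $\tilde{\Sigma}$), but both directions diverge from the paper's proof in ways worth recording. In the direction ``$\Sigma$ complete $\Rightarrow$ extremely disconnected'' you replace the paper's contradiction argument (which extracts a nonempty clopen $\tilde{F}\subseteq\tilde{E}\setminus\cl{}{U}$ and then contradicts $\cl{}{E}=\tilde{E}$) by the direct double inclusion $E\subseteq U\subseteq\tilde{E}$, yielding $\cl{}{U}=\tilde{E}$ at once; this is shorter and equivalent. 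In the forward direction, your decision to read completeness of $\tilde{\Sigma}$ in the Boolean-algebra (supremum) sense is not a stylistic quibble but a necessary repair. The paper's own proof continues past the point where you stop: it asserts that if $\tilde{E}\setminus\bigcup\Delta\neq\emptyset$ then this set contains a nonempty clopen, and concludes that $\bigcup\Delta$ itself is clopen. That step fails, since $\bigcup\Delta$ is dense in its closure $\tilde{E}$, so $\tilde{E}\setminus\bigcup\Delta$ has empty interior and cannot contain a nonempty clopen set; concretely, in $\xi_{2^{\naturals}}\naturals=\beta\naturals$ the union $\bigcup_n\{n\}=\naturals$ of clopen singletons is open and dense but not closed, so $\tilde{\Sigma}$ is never literally closed under arbitrary set-theoretic unions once $X$ is infinite. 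Your formulation---$\cl{}{V}$ is clopen, hence equals some $\tilde{F}$, and $\tilde{F}$ is the least upper bound of the family in the Boolean algebra $\tilde{\Sigma}$---is the correct statement and proof of that implication, and flagging the two distinct senses of ``complete'' on the two sides of the equivalence is exactly the right thing to do.
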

\begin{proof}
Suppose that $\xi_{\Sigma}X$ is extremely disconnected and let $\Delta\subseteq\tilde{\Sigma}$. Then $U=\cup\Delta$ is open
and hence $\cl{}{U}$ is also open, thus belongs to $\tilde{\Sigma}$, say $\cl{}{U}=\tilde{E}$. %Clearly $\cup\Delta\subseteq E$.
If $\tilde{E}\setminus\cup\Delta\neq\emptyset$, then there exists is a clopen set $\emptyset\neq\tilde{F}\subseteq\tilde{E}\setminus\cup\Delta$.
Therefore $\cl{}{U}\subseteq\tilde{E}\setminus\tilde{F}\subsetneq\tilde{E}$, a contradiction. 

Now, suppose that $\Sigma$ is complete and let $U$ be an open set in $\xi_{\Sigma}X$. Take $\Delta\subset\Sigma$ such that
$U=\cup\tilde{\Delta}$. Since $\Sigma$ is complete, $E=\cup\Delta\in\Sigma$ and $\cl{}{U}\subseteq\cl{}{E}=\tilde{E}$.
If $\tilde{E}\setminus\cl{}{U}\neq\emptyset$ (so open) then it contains a nonempty clopen $\tilde{F}\in\tilde{\Sigma}$.
Now $\tilde{E}\setminus\tilde{F}$ is a clopen containing $\cup\tilde{\Delta}$ and strictly contained in $\tilde{E}$, a contradiction.
Thus $\cl{}{U}=\tilde{E}$ is clopen and hence $\xi_{\Sigma}X$ is extremely disconnected.
\end{proof}
\ma{To prove the last proposition of this subsection, we need the following well-known lemma. The proof is  straightforward, so  we omit it here.}

\begin{lemma}\label{cntbly-gen}
Let $(X,\tau)$ be a second countable topological space and let $\mathcal{B}_{\tau}$ be the $\sigma$-algebra of 
Borel subsets of $X$. Then $\mathcal{B}_{\tau}$ is countably generated.
\end{lemma}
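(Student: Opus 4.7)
The plan is to exhibit an explicit countable generating family for $\mathcal{B}_{\tau}$, namely a countable basis for $\tau$ itself. Let $\mathcal{U}=\{U_n\}_{n\in\naturals}$ be a countable basis for the topology $\tau$, which exists by hypothesis, and set $\Sigma' := \sigma(\mathcal{U})$. I want to show $\Sigma' = \mathcal{B}_{\tau}$.

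The inclusion $\Sigma' \subseteq \mathcal{B}_{\tau}$ is immediate, since every $U_n$ is open and therefore Borel, and $\mathcal{B}_{\tau}$ is itself a $\sigma$-algebra containing $\mathcal{U}$, so it must contain $\sigma(\mathcal{U})$.

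For the reverse inclusion, the key observation is that in a second countable space every open set $V\in\tau$ can be written as a countable union of basis elements: $V=\bigcup_{n\in I_V}U_n$ for some $I_V\subseteq\naturals$. Since each $U_n\in\Sigma'$ and $\Sigma'$ is closed under countable unions, $V\in\Sigma'$. Hence $\tau\subseteq\Sigma'$, and taking $\sigma$-algebras yields $\mathcal{B}_{\tau}=\sigma(\tau)\subseteq\Sigma'$.

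Combining both directions gives $\mathcal{B}_{\tau}=\sigma(\mathcal{U})$, which exhibits $\mathcal{B}_{\tau}$ as countably generated. There is no real obstacle here; the only point requiring any care is invoking second countability at precisely the step where an arbitrary open set is written as a countable union of basis elements, which is just the defining property of a basis together with countability of $\mathcal{U}$.
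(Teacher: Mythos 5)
Your proof is correct and follows exactly the argument the paper has in mind (the paper omits the proof as straightforward, but its intended argument is precisely this: a countable basis $\mathcal{U}$ satisfies $\tau\subseteq\sigma(\mathcal{U})$, hence $\mathcal{B}_{\tau}=\sigma(\tau)=\sigma(\mathcal{U})$). No issues.
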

\ignore{
\begin{proof}
Let $\mathcal{U}$ be a countable base for $\tau$. Then $\tau\subseteq\sigma(\mathcal{U})$  and hence, 
$\mathcal{B}_{\tau}=\sigma(\tau)=\sigma(\mathcal{U})$.
\end{proof}
}
\begin{prop}\label{meas-fuc-alg}
Suppose that $\Sigma$ is a countably generated $\sigma$-algebra on $X\neq\emptyset$ such that every open subset of $\xi_{\Sigma}X$ 
belongs to $\sigma(\tilde{\Sigma})$. Let $\map{\iota}{A}{\ell^{\infty}(X)}$ be an algebra homomorphism. 
Then the induced map $\map{\iota_*|_X}{(X,\Sigma)}{\Sp{\norm{X\iota}{}}{A}}$ is $\Sigma$-measurable if and only if 
$\iota A\subseteq\Meas{b}{X,\Sigma}$.
\end{prop}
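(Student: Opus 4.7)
The plan is to prove the two implications separately. The forward direction is essentially a formal unwinding, while the backward direction carries the content and relies crucially on the hypothesis about open subsets of $\xi_{\Sigma}X$.

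For $(\Rightarrow)$, I would exploit that for each $a\in A$ the Gelfand transform $\hat{a}:\Sp{\norm{X\iota}{}}{A}\to\fld$ is continuous. Unwinding the definition of $\iota_*$ gives the identity
\[
	(\hat{a}\circ\iota_*|_X)(x)=\iota_*|_X(x)(a)=(e_x\circ\iota)(a)=(\iota a)(x),
\]
so $\hat{a}\circ\iota_*|_X=\iota a$ as functions $X\to\fld$. If $\iota_*|_X$ is $\Sigma$-measurable (into the Borel $\sigma$-algebra of the target), then $\iota a$ is $\Sigma$-measurable as a composition of $\iota_*|_X$ with a continuous map; since $\iota a\in\ell^{\infty}(X)$ is bounded, it lies in $\Meas{b}{X,\Sigma}$.

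For $(\Leftarrow)$, the idea is to factor the problem through $\xi_{\Sigma}X$. The hypothesis $\iota A\subseteq\Meas{b}{X,\Sigma}$ lets me write $\iota$ as a composition $A\xrightarrow{\iota}\Meas{b}{X,\Sigma}\hookrightarrow\ell^{\infty}(X)$. Applying the spectrum construction yields a continuous map $\iota_*:\xi_{\Sigma}X\to\Sp{\norm{X\iota}{}}{A}$, and for the natural evaluation embedding $e:X\to\xi_{\Sigma}X$ we have $\iota_*|_X=\iota_*\circ e$. Since $\iota_*$ is continuous (hence Borel measurable), the statement reduces to showing that $e$ is Borel measurable with respect to $\Sigma$ on the source and the Borel $\sigma$-algebra of $\xi_{\Sigma}X$ on the target.

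To establish the Borel measurability of $e$, I would invoke the hypothesis that every open subset of $\xi_{\Sigma}X$ lies in $\sigma(\tilde{\Sigma})$. Combined with Corollary~\ref{MCmpt}~\eqref{MCmptI}, which tells us that elements of $\tilde{\Sigma}$ are clopen (hence Borel), this forces the Borel $\sigma$-algebra of $\xi_{\Sigma}X$ to coincide with $\sigma(\tilde{\Sigma})$. Therefore it suffices to verify that $e^{-1}(\tilde{E})\in\Sigma$ for each $E\in\Sigma$. By the construction of $\tilde{\chi}_E$ as the continuous extension of $\chi_E$, one has $\tilde{\chi}_E(e_x)=\chi_E(x)$ for all $x\in X$, so
\[
	e^{-1}(\tilde{E})=\{x\in X:\tilde{\chi}_E(e_x)=1\}=\{x\in X:\chi_E(x)=1\}=E\in\Sigma,
\]
completing the reduction. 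The main obstacle is conceptual rather than computational: recognizing that the hypothesis on open subsets of $\xi_{\Sigma}X$ is exactly what permits the Borel $\sigma$-algebra of $\xi_{\Sigma}X$ to be pulled back through $e$ by checking on the transparent generating family $\tilde{\Sigma}$; without it, Borel sets could lie beyond $\sigma(\tilde{\Sigma})$ and the reduction would fail. The countable generation of $\Sigma$ plays a background role, making the hypothesis on open subsets a natural regularity condition (see, e.g., Lemma~\ref{cntbly-gen}) rather than appearing as a step in the argument itself.
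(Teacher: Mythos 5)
Your argument is correct, and while the forward direction matches the paper's (you phrase it directly, the paper contrapositively: both come down to the identity $\hat{a}\circ\iota_*|_X=\iota a$), your backward direction takes a genuinely different route. The paper works directly with the subbasic open sets $\hat{a}^{-1}(O)$ of $\Sp{\norm{X\iota}{}}{A}$ and leans on the countable generation of $\Sigma$ together with Lemma~\ref{cntbly-gen} to control the Borel structure of $\xi_{\Sigma}X$ before passing to the spectrum of $A$; you instead factor $\iota_*|_X$ as $\iota_*\circ e$ through $\xi_{\Sigma}X$, observe that $\iota_*$ is continuous (hence Borel), and reduce everything to the measurability of the evaluation embedding $e$, which you verify on the generating family via $e^{-1}(\tilde{E})=E$. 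Your factorization is cleaner in two respects: it makes explicit where the hypothesis that open subsets of $\xi_{\Sigma}X$ lie in $\sigma(\tilde{\Sigma})$ enters (it forces $\mathcal{B}(\xi_{\Sigma}X)=\sigma(\tilde{\Sigma})$, so measurability of $e$ can be checked on $\tilde{\Sigma}$ alone), and it reveals that the countable generation of $\Sigma$ is not actually needed for your argument, whereas the paper's direct approach must justify why measurability on subbasic open sets of $\Sp{\norm{X\iota}{}}{A}$ suffices for arbitrary open sets --- a point the paper handles only sketchily via its opening remarks. One small caveat inherited from the paper's conventions: the induced map on spectra a priori lands in $\Sp{\norm{X\iota}{}}{A}\cup\{{\bf 0}\}$ rather than in $\Sp{\norm{X\iota}{}}{A}$, since some $\beta\in\xi_{\Sigma}X$ could annihilate $\iota A$; this does not affect the measurability of the composite restricted to $X$, but you should either note it or restrict $\iota_*$ to the (open) preimage of $\Sp{\norm{X\iota}{}}{A}$.
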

\begin{proof}
\ma{Note that by Corollary~\ref{MCmpt},   $\tilde{\Sigma}$ forms a basis for the topology of $\xi_\Sigma X$. But since $\Sigma$ and subsequently $\tilde{\Sigma}$ are countably generated,  every Borel subset of $\xi_\Sigma X$  belongs to $\tilde{\Sigma}$, by Lemma \ref{cntbly-gen}.}
A basic open set of $\Sp{\norm{X\iota}{}}{A}$ is of the form $\hat{a}^{-1}(O)$ where $O\subseteq\fld$ is open. Computing the 
inverse image of $\hat{a}^{-1}(O)$ under $\iota_*$ we have:
\begin{equation}\label{Xopen}
	\iota_*|_X^{-1}\hat{a}^{-1}(O) = \hat{\iota a}^{-1}(O)\cap X
\end{equation}

($\Rightarrow$) Suppose that $\iota_*$ is $\Sigma$-measurable. If in contrary $\iota a\not\in\Meas{b}{X,\tau}$ for some $a\in A$,
then there exists a  set $O\subseteq\fld$, such that $\hat{\iota a}^{-1}(O)\cap X$ is not  $\Sigma$-measurable and hence by
\eqref{Xopen}, $\iota_*|_X$ can not be $\Sigma$-measurable, a contradiction.

($\Leftarrow$) If each $\iota a$ is $\Sigma$-measurable, then $\hat{\iota a}^{-1}(O)$ is $\Sigma$-measurable for any open $O\subseteq\fld$
and again by \eqref{Xopen}, $\iota_*|_X$ is $\Sigma$-measurable.
\end{proof}
\subsection{Measures on $(X,\Sigma)$ and $\xi_{\Sigma}X$}
Starting with a measurable structure $(X,\Sigma)$, we identified $X$ as an open dense subset of a totally disconnected compact space
$\xi_{\Sigma}X$ where every bounded $\Sigma$-measurable function on $X$ extends continuously to $\xi_{\Sigma}X$. This naturally leads one to
ask  about  the relation between measures on $(X,\Sigma)$ and $\xi_{\Sigma}X$.
\begin{prop}\label{meas-ext}
Let $\mu$ be a positive measure on $(X,\Sigma)$. Then $\mu$ extends to a Borel measure $^{\ast}\mu$ on $\xi_{\Sigma}X$, satisfying
\[
	\forall E\in\Sigma\quad^{\ast}\mu(\tilde{E})=\mu(E).
\]
\end{prop}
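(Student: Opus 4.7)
My plan is to construct $^{\ast}\mu$ via the Riesz-Markov representation theorem, applied to a natural positive linear functional on $\Cnt{}{\xi_{\Sigma}X}$ induced by integrating against $\mu$. The first thing I would pin down is the Boolean structure underlying the map $E\mapsto\tilde{E}$: since $\chi_{E}^{2}=\chi_{E}$ passes to $\tilde{\chi}_{E}$, one has $\tilde{\chi}_{E}=\chi_{\tilde{E}}$; the identities $\chi_{E\cap F}=\chi_{E}\chi_{F}$ and $\chi_{E^{c}}=1-\chi_{E}$ then show that $E\mapsto\tilde{E}$ is a Boolean algebra homomorphism from $\Sigma$ into the clopen algebra $\tilde{\Sigma}$ of $\xi_{\Sigma}X$. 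Injectivity is immediate: $\tilde{\chi}_{E}|_{X}=\chi_{E}$ forces $\tilde{E}\cap X=E$, so $\tilde{E}=\tilde{F}$ implies $E=F$.

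Assuming $\mu$ is finite, I would then invoke the Gelfand-Naimark theorem for the commutative unital $C^{*}$-algebra $(\Meas{b}{X,\Sigma},\norm{X}{})$: the Gelfand transform $f\mapsto\tilde{f}$ is an isometric $\ast$-isomorphism onto $\Cnt{}{\xi_{\Sigma}X}$. This lets me unambiguously define
\[
	L(\tilde{f}):=\int_{X}f\,d\mu\qquad(\tilde{f}\in\Cnt{}{\xi_{\Sigma}X}),
\]
which is linear, positive (since $\tilde{f}\ge 0$ on $\xi_{\Sigma}X$ yields $f=\tilde{f}|_{X}\ge 0$), and bounded by $\mu(X)\norm{X}{f}$. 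By Riesz-Markov, $L$ is represented by a unique regular Borel measure $^{\ast}\mu$ on $\xi_{\Sigma}X$. Specialising to $g=\chi_{\tilde{E}}=\tilde{\chi}_{E}$ gives $^{\ast}\mu(\tilde{E})=L(\tilde{\chi}_{E})=\int_{X}\chi_{E}\,d\mu=\mu(E)$, which is the required identity.

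A concrete alternative is Carath\'eodory extension from $\tilde{\Sigma}$ with $^{\ast}\mu_{0}(\tilde{E}):=\mu(E)$. Finite additivity follows from the Boolean homomorphism property together with additivity of $\mu$; countable additivity is essentially free, because any disjoint decomposition $\tilde{E}=\bigsqcup_{n}\tilde{E}_{n}$ inside $\tilde{\Sigma}$ covers the compact set $\tilde{E}$ by the clopen, hence open, pieces $\tilde{E}_{n}$, forcing all but finitely many to be empty and reducing the claim to finite additivity. The main subtlety I foresee is that, because $\xi_{\Sigma}X$ is typically not second countable (Remark~\ref{r:xi-Sigma-not-metrizable}), open sets need not be countable unions of basic clopens, so $\sigma(\tilde{\Sigma})$ could be strictly smaller than the Borel $\sigma$-algebra of $\xi_{\Sigma}X$; the Riesz-Markov route bypasses this by delivering a genuine (regular) Borel measure directly. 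A secondary caveat is that $L$ is only defined when $\mu(X)<\infty$, so for unbounded but $\sigma$-finite $\mu$ one has to partition $X$ into finite-measure blocks, construct $^{\ast}\mu$ piecewise, and patch the results together.
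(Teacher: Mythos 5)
Your main argument is exactly the paper's proof: define $L(f)=\int_X f|_X\,d\mu$ on $\Cnt{}{\xi_{\Sigma}X}$, apply the Riesz representation theorem, and evaluate at $\tilde{\chi}_E$. The extra care you take --- invoking Gelfand--Naimark to see that every $f\in\Cnt{}{\xi_{\Sigma}X}$ restricts to an element of $\Meas{b}{X,\Sigma}$, so that $L$ is well defined, and flagging that $L$ only makes sense when $\mu(X)<\infty$ (with a patching argument for the $\sigma$-finite case) --- addresses two points the paper's proof passes over silently, but the route is the same.
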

\begin{proof}
Define a linear functional $\map{L}{\Cnt{}{\xi_{\Sigma}X}}{\reals}$ by 
\[
	L(f)=\int_X f|_X~d\mu,\quad\forall f\in\Cnt{}{\xi_{\Sigma}X}.
\]
Clearly $L$ is positive and hence by Riesz representation theorem, there exists a Borel measure $^{\ast}\mu$ on $\xi_{\Sigma}X$ such that
\[
	L(f)=\int_{\xi_{\Sigma}X}f~d~^{\ast}\mu,\quad\forall f\in\Cnt{}{\xi_{\Sigma}X}.
\]
Note that for every $E\in\Sigma$, $^{\ast}\mu(\tilde{E})=\int\tilde{\chi}_E~d~^{\ast}\mu=L(\tilde{\chi}_E)=\int\chi_E~d\mu=\mu(E)$.
\end{proof}
Although the measure $^{\ast}\mu$ obtained in Proposition \ref{meas-ext} seems to be mainly supported on $X$, but in fact, the size of
$X\cap\supp{^{\ast}\mu}$ is rather small as it is pointed out in the following proposition.
\begin{prop}\label{meas-shift}
Let $\mu$ be a finite Borel measure on $\xi_{\Sigma}X$ and  $\Sigma$ contains all singletons. Then $X\cap\supp{\mu}$ is at most countable.
\end{prop}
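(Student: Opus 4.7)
The plan is to exploit the fact that, when $\Sigma$ contains all singletons, each point of $X$ is an isolated point of $\xi_{\Sigma}X$, and then apply the standard finiteness argument for atoms of a finite measure.

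First, I would invoke Corollary~\ref{MCmpt}.\eqref{MCmptII}: since $\Sigma$ contains all singletons, the subspace topology on $X$ is discrete, and moreover each $\{x\}$ with $x\in X$ is a clopen subset of $\xi_{\Sigma}X$ (indeed, this already follows from \eqref{MCmptI} applied to $E=\{x\}\in\Sigma$). Consequently, for every $x\in X$, the singleton $\{x\}$ is itself an open neighborhood of $x$ in $\xi_{\Sigma}X$.

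Next, I would unpack what it means for a point $x\in X$ to lie in $\supp\mu$: every open neighborhood of $x$ has strictly positive $\mu$-measure. Combined with the previous step, this forces $\mu(\{x\})>0$ for every $x\in X\cap\supp\mu$. Thus
\[
X\cap\supp\mu \;\subseteq\; \{x\in X:\mu(\{x\})>0\}.
\]

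Finally, I would apply the classical countability-of-atoms argument. Writing
\[
\{x\in X:\mu(\{x\})>0\} \;=\; \bigcup_{n\in\naturals}\bigl\{x\in X:\mu(\{x\})>\tfrac{1}{n}\bigr\},
\]
each set in the union has cardinality at most $n\cdot\mu(\xi_{\Sigma}X)<\infty$ because $\mu$ is finite and the singletons $\{x\}$ are pairwise disjoint measurable sets. A countable union of finite sets is countable, giving the desired conclusion.

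There is no real obstacle here beyond recognizing that the hypothesis ``$\Sigma$ contains all singletons'' is precisely what is needed to make each $x\in X$ an atom of the ambient topology of $\xi_{\Sigma}X$; once that is in hand, the rest is the standard observation that a finite measure can have only countably many atoms.
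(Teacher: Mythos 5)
Your proof is correct and follows essentially the same route as the paper's: singletons of $X$ are open in $\xi_{\Sigma}X$ because $\Sigma$ contains them, so every point of $X\cap\supp{\mu}$ is an atom, and a finite measure has only countably many atoms. The only difference is that you spell out the standard $\bigcup_n\{x:\mu(\{x\})>1/n\}$ decomposition, which the paper leaves implicit.
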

\begin{proof}
By definition, a point $x\in\xi_{\Sigma}X$ belongs to $\supp{\mu}$ if and only if for every neighbourhood $U$ of $x$, $\mu(U)>0$. 
Every singleton $\{z\}$ for $z\in X$ is open in $\xi_{\Sigma}X$, thus for every $z\in X\cap\supp{\mu}$, $\mu(\{z\})>0$.
Since $\mu(\xi_{\Sigma}X)<\infty$, $X\cap\supp{\mu}$ can not be uncountable.
\end{proof}
\begin{crl}
\ma{Let $\mu$ be a positive  measure on $(X, \Sigma)$.} If $\mu(\{x\})=0$, for some $x\in X$, then $x\not\in\supp{^{\ast}\mu}$.
\end{crl}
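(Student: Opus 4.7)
The plan is to observe that the hypothesis $\mu(\{x\})=0$ only makes sense if $\{x\}\in\Sigma$, which is the crucial implicit assumption. Once $\{x\}$ lies in $\Sigma$, I can exploit the structural result of Corollary \ref{MCmpt}\eqref{MCmptI}, which says that the closure in $\xi_{\Sigma}X$ of any element of $\Sigma$ is clopen. Combining this with Hausdorffness of $\xi_{\Sigma}X$ will pin down the closure of $\{x\}$ very explicitly, and then Proposition \ref{meas-ext} will transfer the mass $0$ of $\{x\}$ from $(X,\Sigma)$ to $\xi_{\Sigma}X$.

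In more detail, I would argue as follows. Since $\{x\}\in\Sigma$, Corollary \ref{MCmpt}\eqref{MCmptI} gives that $\tilde{\{x\}}=\cl{}{\{x\}}$ is a clopen subset of $\xi_{\Sigma}X$. Because $\xi_{\Sigma}X$ is a compact Hausdorff space, the singleton $\{x\}$ is closed in $\xi_{\Sigma}X$, so $\cl{}{\{x\}}=\{x\}$ and therefore $\{x\}$ itself is a clopen subset of $\xi_{\Sigma}X$. In particular, $\{x\}$ is an open neighbourhood of $x$ in $\xi_{\Sigma}X$.

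Next, by the defining property of the extension $^{\ast}\mu$ given in Proposition \ref{meas-ext}, we have
\[
	^{\ast}\mu(\{x\}) = {^{\ast}\mu}(\tilde{\{x\}}) = \mu(\{x\}) = 0.
\]
Thus $x$ admits an open neighbourhood of $^{\ast}\mu$-measure zero, which is precisely the negation of the condition for $x$ to lie in $\supp{^{\ast}\mu}$. Hence $x\notin\supp{^{\ast}\mu}$.

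There is really no significant obstacle; the whole argument is a two-line consequence of the fact that for atomic $\Sigma$-points the closure operation in $\xi_{\Sigma}X$ is trivial, so the mass assignment is faithful on singletons. The only point worth flagging is the implicit measurability of $\{x\}$, which one should remark on to justify writing $\mu(\{x\})$ in the hypothesis.
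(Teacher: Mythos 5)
Your proof is correct and follows essentially the same route as the paper's: both identify $\{x\}$ as an open subset of $\xi_{\Sigma}X$ carrying $^{\ast}\mu$-measure zero (the paper computes $^{\ast}\mu(\{x\})=\int\tilde{\chi}_x\,d\,^{\ast}\mu=0$ directly, which is just the defining property of $^{\ast}\mu$ from Proposition \ref{meas-ext} that you invoke). Your explicit remark that $\{x\}\in\Sigma$ is an implicit hypothesis, and your justification that $\tilde{\{x\}}=\{x\}$ via Hausdorffness, merely spell out steps the paper leaves tacit.
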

\begin{proof}
Since $\{x\}\in\Sigma$ and $\mu(\{x\})=0$, $\chi_x\in\Meas{b}{X,\Sigma}$ and $\int_X\chi_xd\mu=0$. 
Thus $^{\ast}\mu(\{x\})=\int_{\xi_{\Sigma}X}\tilde{\chi}_{x}d~^{\ast}\mu=0$. But $\{x\}$ is open and hence $x\not\in\supp{^{\ast}\mu}$.
\end{proof}
\subsection{Borel Algebra of a topology}\label{nstrd}
Let $(X,\tau)$ be a $T_1$ topological space and denote by $\mathcal{B}_{\tau}$ its Borel algebra. Since the topology is $T_1$, singletons are
Borel and hence $\Meas{b}{X,\mathcal{B}_{\tau}}$ separates points of $X$.
Clearly the inclusion $\map{\iota}{\Cnt{b}{X,\tau}}{\Meas{b}{X,\mathcal{B}_{\tau}}}$ is continuous and hence 
$\map{\iota_{\ast}}{\xi_{\mathcal{B}_{\tau}}X}{\Sp{\|\cdot\|_X}{\Cnt{b}{X,\tau}}}$ is onto. Consequently, if $\tau$ is completely regular, then $\beta X$
is a continuous image of $\xi_{\mathcal{B}_{\tau}X}$ where $\beta X$ is the Stone-\v{C}ech compactification of $X$ (look at \cite[6.5]{Gil-Jer}). But $\iota_\ast$  cannot be injective unless $\tau$ is extremely disconnected, in which case 
$\mathcal{B}_{\tau}=\tau$ and hence $\xi_{\mathcal{B}_{\tau}}$ and $\beta$ are identical. It is  natural to ask if there is any relation between 
topological structure of $(X,\tau)$ and $\xi_{\mathcal{B}_{\tau}}X$.

Let $x\in X$ and $\nbhd{x}{\tau}$ be the family of open neighbourhoods of $x$ in $\tau$ and $\tilde{\mathcal{N}}_{\tau}(x)=\{\tilde{U}:U\in\nbhd{x}{\tau}\}$.
Define the \textit{halo} of $x$ in $\xi_{\mathcal{B}_{\tau}}X$ as 
\[
	h(x) := \bigcap\tilde{N}_{\tau}(x).
\]
The set $h(x)$ is compact and contains all points of $\xi_{\mathcal{B}_{\tau}}X$ that can not be distinguished from $x$ via the image of $\tau$.
If $\tau$ is Hausdorff, then for every $x\neq y\in X$, there are open sets $U_x, U_y\in\tau$ with $U_x\cap U_y=\emptyset$. 
Thus $\tilde{U}_x\cap\tilde{U}_y=\emptyset$, and therefore $h(x)\cap h(y)=\emptyset$.
\begin{prop}\label{Open-halos}
If $\tau$ is Hausdorff, then $h(x)$ is open if and only if $\{x\}$ is open in $(X, \tau)$.
\end{prop}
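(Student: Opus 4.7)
The plan is to show that when $h(x)$ is open the halo must coincide with $\tilde{\{x\}}$, after which a compactness argument reduces the defining intersection to a finite subintersection, yielding an open neighbourhood of $x$ equal to $\{x\}$ itself. The reverse implication is essentially bookkeeping.

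For ($\Leftarrow$), if $\{x\} \in \tau$, then $\{x\}$ itself lies in $\mathcal{N}_\tau(x)$, so $\tilde{\{x\}}$ is one of the clopen sets appearing in the intersection defining $h(x)$, giving $h(x) \subseteq \tilde{\{x\}}$. Conversely, $\{x\} \subseteq U$ for every $U \in \mathcal{N}_\tau(x)$, and monotonicity of $E \mapsto \tilde{E}$ (immediate from $\chi_E \leq \chi_F$ on $X$ together with density) gives $\tilde{\{x\}} \subseteq \tilde{U}$ for each such $U$, hence $\tilde{\{x\}} \subseteq h(x)$. So $h(x) = \tilde{\{x\}}$, which is clopen.

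For ($\Rightarrow$), the first task is to show $h(x) \cap X = \{x\}$. Given $y \in X \setminus \{x\}$, Hausdorffness furnishes disjoint $U, V \in \tau$ with $x \in U$ and $y \in V$; from $\chi_U \chi_V = 0$ one obtains $\tilde{\chi}_U \tilde{\chi}_V = 0$ (continuous extension from the dense $X$), so $\tilde{U} \cap \tilde{V} = \emptyset$. Since $y \in V \subseteq \tilde{V}$ and $h(x) \subseteq \tilde{U}$, this forces $y \notin h(x)$. Assuming now that $h(x)$ is open, Corollary~\ref{MCmpt} supplies a basis of clopens $\tilde{\Sigma}$, so for each $z \in h(x)$ there is $E \in \mathcal{B}_\tau$ with $z \in \tilde{E} \subseteq h(x)$; intersecting with $X$ via $\tilde{E} \cap X = E$ forces $\emptyset \neq E \subseteq h(x) \cap X = \{x\}$, hence $E = \{x\}$ and $z \in \tilde{\{x\}}$. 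Together with the always-valid $\tilde{\{x\}} \subseteq h(x)$, we obtain $h(x) = \tilde{\{x\}}$.

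The final step is a compactness reduction. The identity $\bigcap_{U \in \mathcal{N}_\tau(x)} \tilde{U} = \tilde{\{x\}}$, combined with the compactness of the closed complement $\xi_{\mathcal{B}_\tau}X \setminus \tilde{\{x\}}$ covered by the opens $\xi_{\mathcal{B}_\tau}X \setminus \tilde{U}$, yields finitely many $U_1, \dots, U_n \in \mathcal{N}_\tau(x)$ with $\bigcap_{i=1}^n \tilde{U}_i \subseteq \tilde{\{x\}}$. Using $\widetilde{A \cap B} = \tilde{A} \cap \tilde{B}$ (from $\chi_{A \cap B} = \chi_A \chi_B$) iteratively, $U := \bigcap_{i=1}^n U_i \in \mathcal{N}_\tau(x)$ satisfies $\tilde{U} \subseteq \tilde{\{x\}}$, hence $U = \tilde{U} \cap X \subseteq \{x\}$, so $U = \{x\} \in \tau$. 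I expect the delicate step to be the inclusion $h(x) \subseteq \tilde{\{x\}}$: it is here that Hausdorffness (restricting $h(x) \cap X$ to a point) and the basis structure of $\xi_{\mathcal{B}_\tau}X$ (bringing the clopen $\tilde{\{x\}}$ into play) must be combined.
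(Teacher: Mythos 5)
Your proof is correct, and its core follows the same route as the paper's: use Hausdorffness to show halos of distinct points are disjoint (equivalently, $h(x)\cap X=\{x\}$), then exploit the clopen structure of $\xi_{\mathcal{B}_\tau}X$ to identify an open $h(x)$ with $\tilde{\{x\}}$ --- the paper does this by writing the clopen set $h(x)$ as $\tilde{E}$ and forcing $E=\{x\}$, while you cover $h(x)$ by basic clopens and intersect each with $X$; these are interchangeable. Where you genuinely add something is the final compactness reduction: the paper's printed argument stops at $h(x)=\tilde{\{x\}}$ and does not explain why this yields $\{x\}\in\tau$ (openness of $\{x\}$ in $\xi_{\mathcal{B}_\tau}X$ is automatic from Corollary~\ref{MCmpt} and says nothing about $\tau$). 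Your step --- covering the compact clopen complement of $\tilde{\{x\}}$ by the open sets $\xi_{\mathcal{B}_\tau}X\setminus\tilde{U}$, extracting $U_1,\dots,U_n\in\mathcal{N}_\tau(x)$ with $\bigcap_{i=1}^n\tilde{U}_i\subseteq\tilde{\{x\}}$, and using $\widetilde{A\cap B}=\tilde{A}\cap\tilde{B}$ to conclude $\bigcap_{i=1}^n U_i=\{x\}$ --- is exactly the missing bridge, and it is the only place where compactness of $\xi_{\mathcal{B}_\tau}X$ is actually needed. So your write-up is not just correct but more complete than the published one.
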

\begin{proof}
If $\{x\}$ is open, then $\{x\}\in\nbhd{x}{\tau}$. Since $\tilde{\{x\}}=\{x\}$, clearly, $x\in h(x)\subseteq\{x\}$.
Conversely, if $h(x)$ is open, then it is clopen and hence $h(x)=\tilde{E}\in\tilde{\mathcal{B}}_{\tau}$. If ${E}\neq\{x\}$, then $E$ contains
another point $y\in X$, $y\neq x$. Thus $y\in h(x)$ which implies that $h(x)\cap h(y)\neq\emptyset$, contradicting the above argument.
\end{proof}
Proposition \ref{Open-halos} can be read as $h(x)=\{x\}$ if and only if $\{x\}$ is open in $(X, \tau)$. The following shows how the 
compactness of a Borel subset of $(X,\tau)$ is reflected in $\xi_{\mathcal{B}_{\tau}}X$.
\begin{thm}\label{semi-robinson}
Let $Y\subseteq(X,\tau)$ be a Borel subspace. Then $Y$ is compact if and only if $\tilde{Y}\subseteq\bigcup_{y\in Y}h(y)$.
\end{thm}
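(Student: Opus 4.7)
The plan is to exploit two structural facts established earlier: first, for every $E\in\mathcal{B}_\tau$, the set $\tilde E$ coincides with the closure $\overline{E}$ in $\xi_{\mathcal{B}_\tau}X$ and is clopen (Lemma~\ref{img-of-meas} and Corollary~\ref{MCmpt}); second, $\tilde E\cap X=E$, which follows because the continuous extension $\tilde\chi_E$ restricts to $\chi_E$ on $X$. Together these allow open covers of a Borel subset $Y$ in $(X,\tau)$ to be lifted to clopen covers of $\tilde Y$ in $\xi_{\mathcal{B}_\tau}X$ and pulled back again.

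For the forward implication I would argue by contradiction. Assume $Y$ is compact but some $z\in\tilde Y$ lies outside $\bigcup_{y\in Y}h(y)$. Then for each $y\in Y$ the point $z$ is not in $h(y)$, so I can select $U_y\in\nbhd{y}{\tau}$ with $z\notin\tilde U_y$. The family $\{U_y\}_{y\in Y}$ is an open cover of $Y$ in $(X,\tau)$, so compactness yields a finite subcover $U_{y_1},\dots,U_{y_n}$. Because each $\tilde U_{y_i}$ is closed in $\xi_{\mathcal{B}_\tau}X$, the finite union $\tilde U_{y_1}\cup\cdots\cup\tilde U_{y_n}$ is closed and already contains $Y$, hence contains $\tilde Y=\overline Y$. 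But $z\in\tilde Y$ while $z\notin\tilde U_{y_i}$ for every $i$, a contradiction.

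For the converse, suppose $\tilde Y\subseteq\bigcup_{y\in Y}h(y)$ and let $\{V_\alpha\}_{\alpha\in I}\subset\tau$ be an open cover of $Y$. Choose for each $y\in Y$ some $\alpha(y)\in I$ with $y\in V_{\alpha(y)}$; since $V_{\alpha(y)}\in\nbhd{y}{\tau}$, the definition of the halo gives $h(y)\subseteq\tilde V_{\alpha(y)}$. Consequently
\[
\tilde Y\subseteq\bigcup_{y\in Y}h(y)\subseteq\bigcup_{y\in Y}\tilde V_{\alpha(y)}.
\]
Now $\tilde Y$ is a closed subset of the compact space $\xi_{\mathcal{B}_\tau}X$, so it is itself compact, and each $\tilde V_{\alpha(y)}$ is open, hence finitely many of them already cover $\tilde Y$: say $\tilde Y\subseteq\tilde V_{\alpha(y_1)}\cup\cdots\cup\tilde V_{\alpha(y_n)}$. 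Intersecting with $X$ and applying $\tilde V_\alpha\cap X=V_\alpha$ together with $\tilde Y\cap X=Y$ delivers the finite subcover $Y\subseteq V_{\alpha(y_1)}\cup\cdots\cup V_{\alpha(y_n)}$.

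I do not anticipate a serious obstacle; the only point requiring care is the identity $\tilde E\cap X=E$ (and its consequence $\tilde Y\cap X=Y$), which is not recorded as a separate lemma in the excerpt but is an immediate consequence of $\tilde\chi_E|_X=\chi_E$. I would verify this at the start to keep the translation between covers of $Y$ and covers of $\tilde Y$ clean.
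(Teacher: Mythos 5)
Your proposal is correct. The forward implication is essentially the paper's argument: for $z$ outside $\bigcup_{y\in Y}h(y)$ you choose $O_y\in\nbhd{y}{\tau}$ with $z\notin\tilde O_y$, extract a finite subcover of the compact set $Y$, and use $\tilde Y=\cl{}{Y}\subseteq\tilde O_{y_1}\cup\dots\cup\tilde O_{y_k}$ to exclude $z$ from $\tilde Y$; that is exactly what the authors do. The converse, however, is where you genuinely diverge. The paper argues by contradiction: it assumes $Y$ is not compact, takes an open cover with no finite subcover, builds an ultrafilter on $\xi_{\mathcal{B}_{\tau}}X$ from the associated sets, and exhibits its limit as a point of $\tilde Y$ lying outside every halo. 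You instead give a direct argument: an open cover $\{V_\alpha\}$ of $Y$ yields, via $h(y)\subseteq\tilde V_{\alpha(y)}$ and the hypothesis $\tilde Y\subseteq\bigcup_{y}h(y)$, a clopen cover of the compact set $\tilde Y$; a finite subcover there pulls back to a finite subcover of $Y$ using $\tilde V\cap X=V$. Your route buys transparency and avoids the ultrafilter machinery (whose bookkeeping in the paper's proof is in fact slightly garbled --- the displayed nonempty finite intersections should involve complements of the $O_{i_k}$), at the cost of having to verify the identity $\tilde E\cap X=E$; your justification of that identity via $\tilde\chi_E(e_x)=e_x(\chi_E)=\chi_E(x)$, together with injectivity of $x\mapsto e_x$ (available here since $\tau$ is $T_1$), is sound.
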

\begin{proof}
($\Rightarrow$) Suppose that $Y$ is compact and let $z\in\xi_{\mathcal{B}_{\tau}}X\setminus\bigcup_{y\in Y}h(y)$. 
We show $z\not\in\tilde{Y}$. Since $z\not\in\bigcup_{y\in Y}h(y)$, for each $y\in Y$, there exists $O_y\in\mathcal{N}_{\tau}(y)$
such that $z\not\in\tilde{O}_y$. Now $\{O_y:y\in Y\}$ is an open cover of the compact set $Y$. Let $\{O_{y_1},\dots,O_{y_k}\}$ be
such that $Y\subseteq\bigcup_{i=1}^kO_{y_i}$, then $\tilde{Y}\subseteq\bigcup_{i=1}^k\tilde{O}_{y_i}$ which proves $z\not\in\tilde{Y}$,
and hence $\tilde{Y}\subseteq\bigcup_{y\in Y}h(y)$.

($\Leftarrow$) Suppose that $\tilde{Y}\subseteq\bigcup_{y\in Y}h(y)$, but $Y$ is not compact. Then there exists an open cover
$\{O_i\}_{i\in I}$ of $Y$ with no finite subcover. So, for every finite subset $\{i_1,\dots,i_n\}$ of $I$, 
\[
	Y\cap\left(\bigcap_{k=1}^nO_{i_k}\right)\neq\emptyset.
\]
Since $Y$ is Borel, $\tilde{Y}$ is compact and hence $\{\tilde{O}_i\}_{i\in I}$ forms a basis for an ultrafilter $\mathcal{F}$ in 
$\xi_{\mathcal{B}_{\tau}}X$. Clearly $\tilde{Y}\in\mathcal{F}$ and hence $z=\lim\mathcal{F}\in\tilde{Y}$ (for more detail on filters see
\cite[\S12]{willard}). For every $y\in Y$, there exists 
$i\in I$ such that $O_i\in\nbhd{y}{\tau}$ and hence $z\not\in\tilde{O}_i$. Thus $z\not\in h(y)\subseteq\tilde{O}_i$. This proves
\[
	z\in\tilde{Y}\setminus\bigcup_{y\in Y}h(y),
\]
as desired.
\end{proof}
It is worth mentioning that the results of \ref{nstrd} resemble significant similarities between properties of $\xi_{\mathcal{B}_{\tau}}X$
and nonstandard extensions of $(X,\tau)$. We can consider $\xi_{\mathcal{B}_{\tau}}X$ as a nonstandard model of $(X,\tau)$ and characterize 
halos as analogue to monads and so on. In this scope Theorem \ref{semi-robinson} is the analogue of Robinson's theorem 
\cite[Theorem III.2.2]{Hurd-Loeb} about nonstandard extension of compact spaces.

\section*{Acknowledgements}
For this research, the first author  was supported by a postdoctoral Fellowship at the University of Waterloo and a postdoctoral fellowship at 
Chalmers University of Technology and  University of Gothenburg.  The second author was supported by a   postdoctoral Fellowship at the University 
of Saskatchewan.   The authors thank  Reza Koushesh and Nico Spronk for their productive  comments on the first draft of this manuscript.   \ma{The authors are also grateful to the anonymous referee for making valuable comments which improved the exposition of the paper.} 
\bibliographystyle{plain}
\bibliography{Bibliography}

\begin{thebibliography}{1}

\bibitem{chou}
C-Y. Chou.
\newblock Notes on the separability of {$C^*$}-algebras.
\newblock {\em Taiwanese J. Math.}, 16(2):555--559, 2012.

\bibitem{MGHSKMM2}
M.~Ghasemi, S.~Kuhlmann, and M.~Marshall.
\newblock Moment problem in infinitely many variables.
\newblock {\em Israel J. Math.}, (to appear), 2015.

\bibitem{GKM}
Mehdi Ghasemi, Salma Kuhlmann, and Murray Marshall.
\newblock Application of {J}acobi's representation theorem to locally
  multiplicatively convex topological {$\Bbb{R}$}-algebras.
\newblock {\em J. Funct. Anal.}, 266(2):1041--1049, 2014.

\bibitem{Gil-Jer}
L.~Gillman and M.~Jerison.
\newblock {\em Rings of continuous functions}.
\newblock Springer-Verlag, New York-Heidelberg, 1976.
\newblock Reprint of the 1960 edition, Graduate Texts in Mathematics, No. 43.

\bibitem{Hurd-Loeb}
A.~E. Hurd and P.~A. Loeb.
\newblock {\em An Introduction to Nonstandard Real Analysis}.
\newblock Academic Press Inc., Orlando, Florida 32887, 1985.

\bibitem{Johnstone}
P.~T. Johnstone.
\newblock {\em Stone spaces}, volume~3 of {\em Cambridge Studies in Advanced
  Mathematics}.
\newblock Cambridge University Press, Cambridge, 1986.
\newblock Reprint of the 1982 edition.

\bibitem{Sikorski}
R.~Sikorski.
\newblock {\em Boolean algebras}.
\newblock Third edition. Ergebnisse der Mathematik und ihrer Grenzgebiete, Band
  25. Springer-Verlag New York Inc., New York, 1969.

\bibitem{willard}
S.~Willard.
\newblock {\em General topology}.
\newblock Addison-Wesley Publishing Co., Reading, Mass.-London-Don Mills, Ont.,
  1970.

\end{thebibliography}
\end{document}